\newcommand{\0}{{\bm 0}}
\newcommand{\A}{\bm{A}}
\newcommand{\BigZero}{\mbox{\Large \bf 0}}
\newcommand{\D}{\bm{D}}
\newcommand{\Fc}{{\cal F}}
\newcommand{\F}{\bm{F}}
\newcommand{\I}{\bm{I}}
\newcommand{\M}{\bm{M}}
\newcommand{\Pc}{{\cal P}}
\newcommand{\pfe}[1]{{\spab \! \left( {{\textstyle #1}} \right)} }
\renewcommand{\P}{\bm{P}}
\newcommand{\Realpart}{\mathrm{Re}}
\newcommand{\alignit}[1]{\begin{align}#1\end{align}}
\newcommand{\aligno}[1]{\begin{align*}#1\end{align*}}
\newcommand{\alphav}{\bms{\alpha}}
\newcommand{\bdes}{\begin{description}}
\newcommand{\bd}{\begin{description}}
\newcommand{\bean}{\begin{eqnarray*}}
\newcommand{\benu}{\begin{enumerate}}
\newcommand{\ben}{\begin{enumerate}}
\newcommand{\bite}{\begin{itemize}}
\newcommand{\bi}{\begin{itemize}}
\newcommand{\bms}[1]{{\boldsymbol{#1}}}
\newcommand{\bmu}{\begin{multline}}
\newcommand{\bm}[1]{{\bf #1}}
\newcommand{\df}[2]{\displaystyle \frac{\mbox{\rm d} #1}{\mbox{\rm d} #2}}
\newcommand{\diag}[1]{\mbox{\bf diag}\matrx{#1}}
\newcommand{\dspfrac}[2]{\frac{\displaystyle #1}{\displaystyle #2} }
\newcommand{\eban}{\begin{eqnarray*}}
\newcommand{\eba}{\begin{eqnarray}}
\newcommand{\eb}{\begin{equation}}
\newcommand{\edes}{\end{description}}
\newcommand{\ed}{\end{description}}
\newcommand{\eean}{\end{eqnarray*}}
\newcommand{\eea}{\end{eqnarray}}
\newcommand{\eenu}{\end{enumerate}}
\newcommand{\een}{\end{enumerate}}
\newcommand{\ee}{\end{equation}}
\newcommand{\eite}{\end{itemize}}
\newcommand{\ei}{\end{itemize}}
\newcommand{\emu}{\end{multline}}
\newcommand{\eqdef}{:=}
\newcommand{\ev}{\bm{e}}
\newcommand{\goesto}{\rightarrow}
\newcommand{\hide}[1]{}	%hides text
\newcommand{\matrx}[1]{{\left[ \stackrel{}{#1}\right]}}
\newcommand{\pf}[2]{\displaystyle \frac{\partial #1}{\partial #2}}
\newcommand{\ppf}[2]{\displaystyle \frac{\partial^2 #1}{\partial #2^2}}
\newcommand{\pv}{\bm{p}}
\newcommand{\p}{\bm{p}}
\newcommand{\spab}{\, r}	% spectral abscissa 
\newcommand{\suchthat}{\colon}
\newcommand{\tp}{\top}	%rm better than \sf, \tt, \rm T, or \textsf{\textsc{t}}
\newcommand{\union}{\cup}
\newcommand{\uv}{\bm{u}}
\renewcommand{\v}{\bm{v}}
\newcommand{\vv}{\bm{v}}
\newcommand{\x}{\bm{x}}
\newcommand{\y}{\bm{y}}
\newcommand{\z}{\bm{z}}
\newfont{\gilfont}{cmsy10 scaled\magstep0}
\newcommand{\Reals}{\mathbb{R}} % reals
\newcommand{\uvf}{\uv}
\newcommand{\vvf}{\vv}
\newcommand{\Grm}{\D}
\newcommand{\Gr}{D}
\newcommand{\Mvm}{\A}
\newcommand{\Am}{\F}
\newtheorem{Theorem}{Theorem:}
\newtheorem{Corollary}[Theorem]{Corollary:}
\newtheorem{Lemma}[Theorem]{Lemma:}
\theoremstyle{plain}
\newtheorem{Main}{Main Result\!\!}
\newtheorem{Definitions}{Definitions\!}
\newtheorem{TheoremFiveTwo}{Theorem 5.2\!}
\begin{document}
\title{Karlin Theory
On Growth and Mixing\\
Extended to Linear Differential Equations}
\author{Lee Altenberg \\ \url{altenber@hawaii.edu}}

\maketitle
%\sloppy

\begin{abstract}
Karlin's (1982) Theorem 5.2 shows that linear systems alternating between growth and mixing phases have lower asymptotic growth with greater mixing.  Here this result is extended to linear differential equations that combine site-specific growth or decay rates, and mixing between sites, showing that the spectral abscissa of a matrix $\D + m \A$ decreases with $m$, where  $\D \neq c \I$ is a real diagonal matrix, $\A$ is an irreducible matrix with non-negative off-diagonal elements (an ML- or essentially non-negative matrix), and $m \geq 0$. The result is based on the inequality: $\uv^\tp \A \vv < \pfe{\A}$, where $\uv$ and $\vv$ are the left and right Perron vectors of the matrix $\D + \A$, and $\pfe{\A}$ is the spectral abscissa and Perron root of $\A$.  The result gives an analytic solution to prior work that relied on two-site or numerical simulation of models of growth and mixing, such as source and sink ecological models, or multiple tissue compartment models of microbe growth.  The result has applications to the Lyapunov stability of perturbations in nonlinear systems. 
\end{abstract}

\section{Introduction}

Growth and movement are ubiquitous phenomena in physical, biological, and social systems.  In particular, site-specific growth and decay rates, combined with movement between sites, can be found throughout nature.  An even wider range of phenomena may be included if we consider the formal equivalent:  \emph{state}-specific growth rates, and transformations between states.

\citet{Karlin:1982} developed two very general theorems on the asymptotic growth rates of systems combining growth and movement.  The context that motivated these developments was rather narrow:  analysis of the protection of genetic diversity in a subdivided population undergoing natural selection and migration.  But the theorems are fundamental, describing the long-term growth behavior of a wide range of coupled linear systems, and the stability of many nonlinear systems.

Karlin's theorems apply to discrete time and discrete space systems in which a growth phase is followed by a movement phase, and these are iterated.  This paper focuses on one of the theorems, Theorem 5.2, and extends it to apply to continuous time and discrete space systems.  Extensions to continuous time and continuous space systems entail greater technicalities and are deferred to elsewhere.

Karlin's Theorem 5.2 states simply that in a system of objects undergoing exponential growth or decay at different rates in different sites, \emph{the greater the level of mixing of objects between sites, the lower the long-term growth rate of the whole aggregation of objects}.  Here is the theorem, where $\rho(\M)$ is the spectral radius of matrix $\M$, the largest magnitude of any eigenvalue of $\M$:
\begin{TheoremFiveTwo}[\citealt{Karlin:1982}]
Let $\M$ be a general nonnegative irreducible backward migration matrix.  Consider the family of migration matrices,
\[
\M(\alpha) = (1-\alpha) \I + \alpha \M.
\]
Then for any set of positive fitness values $\D = \diag{d_1, d_2, \ldots, d_n}$,
\[
\rho(\M(\alpha) \D)
\]
is decreasing as $\alpha$ increases (strictly, provided $\D \neq d \I$).
\end{TheoremFiveTwo}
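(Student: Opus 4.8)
The plan is to show that $r(\alpha) := \rho(\M(\alpha)\D)$ has strictly negative derivative on $(0,1]$ whenever $\D \neq d\I$; continuity of the spectral radius and integration then give strict monotonicity on all of $[0,1]$, while the excluded case $\D = d\I$ is trivial since then $\M(\alpha)\D = d\,\M(\alpha)$ has spectral radius $d\,\rho(\M(\alpha)) = d$, independent of $\alpha$ (recall a backward migration matrix is by definition row-stochastic, so $\M$, and hence $\M(\alpha) = (1-\alpha)\I + \alpha\M$, is row-stochastic and has spectral radius $1$). So fix $\alpha \in (0,1]$. Since $\M$ is nonnegative and irreducible, $\M(\alpha)$ inherits an irreducible zero-pattern, and right-multiplication by the positive diagonal $\D$ preserves it; hence $\M(\alpha)\D$ is nonnegative irreducible and, by Perron--Frobenius, $r(\alpha) > 0$ is a simple eigenvalue, analytic in $\alpha$, with strictly positive left and right eigenvectors $\uv = \uv(\alpha)$, $\vv = \vv(\alpha)$, which I normalize by $\uv^\tp\vv = 1$. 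The first-order eigenvalue perturbation formula then gives
\[
r'(\alpha) \;=\; \uv^\tp\Big(\tfrac{d}{d\alpha}\big[\M(\alpha)\D\big]\Big)\vv \;=\; \uv^\tp(\M-\I)\D\vv .
\]

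Next I would eliminate $\M$ using the eigenvalue equation. From $\M(\alpha)\D\vv = r\vv$ and $\M(\alpha) = \I + \alpha(\M-\I)$ one gets $(\M-\I)\D\vv = \tfrac1\alpha\,(r\I - \D)\vv$, so, using $\uv^\tp\vv = 1$,
\[
r'(\alpha) \;=\; \frac1\alpha\,\big(r - \uv^\tp\D\vv\big).
\]
The theorem therefore reduces to the single inequality $\uv^\tp\D\vv > r = \rho(\M(\alpha)\D)$: that the fitnesses $d_i$, averaged against the positive Perron weights $u_i v_i$ (which sum to $1$), strictly exceed the growth rate, unless all $d_i$ coincide.

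To establish that inequality I would exploit the one distinguishing feature of $\M(\alpha)$ --- row-stochasticity, equivalently $\rho(\M(\alpha)) = 1$. Consider $\phi(\sv) := \log\rho\big(\M(\alpha)\,\diag{e^{s_1},\ldots,e^{s_n}}\big)$ on $\Reals^n$. Each entry $\M(\alpha)_{ij}\,e^{s_j}$ is log-affine, hence log-convex, in $\sv$, so by the log-convexity of the Perron root (Kingman's convexity theorem) $\phi$ is convex; its gradient at $\sv^\star := (\log d_1,\ldots,\log d_n)$ is $(u_1 v_1,\ldots,u_n v_n)$ by the Perron-root derivative formula applied coordinatewise, and $\phi(\0) = \log\rho(\M(\alpha)) = 0$. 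The supporting-hyperplane inequality for $\phi$ at $\sv^\star$, evaluated at $\0$, reads $0 = \phi(\0) \ge \phi(\sv^\star) - \sum_i u_i v_i \log d_i = \log r - \sum_i u_i v_i\log d_i$. Combining with strict concavity of $\log$ (using $\sum_i u_i v_i = 1$, $u_i v_i > 0$),
\[
\log r \;\le\; \sum_i u_i v_i \log d_i \;\le\; \log\!\Big(\sum_i u_i v_i\, d_i\Big) \;=\; \log(\uv^\tp\D\vv),
\]
the second inequality being strict unless all $d_i$ are equal. Hence $r < \uv^\tp\D\vv$ whenever $\D \neq d\I$, so $r'(\alpha) < 0$, as required.

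I expect the crux --- and the main obstacle --- to be this last step, $\uv^\tp\D\vv > r$, and within it the tangent-line bound $\log r \le \sum_i u_i v_i \log d_i$: this is exactly where the stochasticity of $\M(\alpha)$ (pinning $\rho(\M(\alpha)) = 1$) and the global log-convexity of the Perron root in the log-fitnesses are both used, and one must keep the normalization $\uv^\tp\vv = 1$ throughout so that the Perron-root derivative reads off precisely as the weights $u_i v_i$. The remaining points --- irreducibility and analyticity of $\M(\alpha)\D$ on $(0,1]$, and the passage from ``$r'<0$ on $(0,1]$'' to strict decrease on $[0,1]$ by continuity at $\alpha=0$ --- are routine. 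A more self-contained substitute for invoking Kingman would be to prove $r < \uv^\tp\D\vv$ directly: from $\D\M(\alpha)(\D\vv) = r\,(\D\vv)$ and row-stochasticity, the entry $(\D\vv)_i$ equals $\tfrac{d_i}{r}$ times an $\M(\alpha)$-weighted average of the $(\D\vv)_j$, hence exceeds that average exactly when $d_i > r$; chasing the resulting positive correlation between $d_i$ and $(\D\vv)_i$ against the left-eigenvector identity yields the inequality, though the convexity route is shorter.
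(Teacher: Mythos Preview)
Your argument is correct. The reduction to the single inequality $\uv^\tp\D\vv > r(\alpha)$ via $r'(\alpha)=\tfrac1\alpha(r-\uv^\tp\D\vv)$ is clean, and your proof of that inequality through the convex function $\phi(\sv)=\log\rho(\M(\alpha)\,\mathrm{diag}(e^{s_i}))$ is sound: the gradient computation $\partial\phi/\partial s_j = u_jv_j$ is right, the tangent-line bound at $\sv^\star$ evaluated at $\0$ uses $\phi(\0)=0$ correctly (this is exactly where row-stochasticity enters), and the Jensen step supplies strictness when $\D\neq d\I$. The endpoint treatment at $\alpha=0$ is indeed routine.

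The route, however, is genuinely different from the paper's. The paper (following Karlin's original proof) does not use Kingman's log-convexity at all; instead it invokes the Donsker--Varadhan/Friedland variational formula $\pfe{\A}=\sup_{\p}\inf_{\x}\sum_i p_i [\A\x]_i/x_i$, with the supremum and infimum attained at $p_i=u_iv_i$ and $\x=\vv$, and exploits the strict $\inf$ to get, for any irreducible ML-matrix $\A$ and real diagonal $\D$, the inequality $\uv(\A+\D)^\tp\A\,\vv(\A+\D)\le \pfe{\A}$ (Corollary~\ref{Corollary:Basic}). Applied with $\A=m\Mvm$ this gives $\tfrac{d}{dm}\pfe{\D+m\Mvm}\le\pfe{\Mvm}$ directly. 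The tradeoff: the variational approach never needs $\M(\alpha)$ to be stochastic---it yields a bound by $\pfe{\Mvm}$ for \emph{any} irreducible ML-matrix $\Mvm$, which is precisely the generalization the paper is after. Your approach, by contrast, pins the tangent line at $\phi(\0)=0$ and so really uses $\rho(\M(\alpha))=1$; it is tailored to the stochastic case, but it draws on a convexity theorem (Kingman/Cohen) that is perhaps more widely known than the Donsker--Varadhan formula, and the final Jensen step makes the source of strictness completely transparent.
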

While the result is cast in terms of the specific context, it should be understood that $\M$ can be any irreducible stochastic matrix, and $\D$ any positive diagonal matrix.

Karlin used the theorem to analyze the stability of coupled nonlinear systems, where each site $i$ has a continuous, differentiable map $f_i(x): [0,1] \mapsto [0,1]$, so the coupled system is:
\[
x_i(t+1) = \sum_j M_{ij} f_j(x_j(t))
\]
In the population genetics context, $f_i(0) = 0$, and the linearized stability dynamics for small $\x$ are
\eb\label{eq:MDsolution}
\x(t) = ( \M(\alpha) \ \D )^t \x(0),
\ee
where the diagonal elements of $\D$ are $D_i = \df{f_i(0)}{x}$.  The zero solution $\x(t)=0$ is unstable to perturbation if $\rho(\M\D) > 1$ and stable to perturbation if $\rho(\M\D) < 1$.

The generality of this result is already evident in that no assumptions are made on $\M$ beyond that it be stochastic, and irreducible, which means there is a path of non-zero elements $M_{i k_1}, M_{k_1 k_2}, \ldots, M_{k_c j}$ between any $i$ and any $j \neq i$.  And no assumptions are placed on the set of nonlinear $f_i(x)$ other than that they be differentiable, and their domain and range be the unit interval, and $f_i(0) = 0$.

The first use of this theorem outside its original context was to analyze the evolution of genetic transmission \citep{Altenberg:1984,Altenberg:and:Feldman:1987,Altenberg:2009:Linear}.  There, instead of objects moving between sites, the objects are  genomes transforming between genotypes.  Theorem 5.2 translates to the \emph{reduction principle}:  the zero solution for a gene that controls `mixing' between genotypes is unstable to perturbation (i.e. to introduction of the gene to the population) if the gene reduces mixing.

The use of this theorem in additional contexts, has to my knowledge, not yet occurred.  This may be due, perhaps, to the limitation of the assumptions of discrete time and discrete space, or simply due to the small community of theoretical population geneticists familiar with it.
\begin{table}[h!t!bp]
\begin{tabular*}{\columnwidth}{p{68pt} p{90pt} p{164pt}}
%\ \\
\hline {\bf Objects:}	& {\bf Sites:} &{\bf Site specific growth/decay rates:} \\
\hline genes	& habitats & fitnesses \\
\hline genomes	& genotypes & fitnesses \\
\hline organisms	& habitats & species survival and reproduction rates\\
\hline agricultural pests	& fields under treatment & replication and survival\\ 
\hline microbes	& tissue compartments & survival and replication rates\\
\hline metabolites	& tissue compartments & catabolic rates \\
\hline reactants	& reaction-diffusion medium &  reaction rates \\
\hline wastes	& reactors & waste breakdown rates\\
\hline photons 	& media & absorption rates\\
\hline particles 	& heterogenous matter & interaction and decay rates\\
\hline capital 	& investments & rates of return\\
\hline 
\end{tabular*}
\caption{A short list of systems exhibiting site-specific growth or decay and mixing between sites.}\label{Table:List}
\end{table}

One may ask whether there is anything about discrete time and space that is essential to the result, or whether there is a more general phenomenon that may extend to continuous time and space.  Here I show that the result can be extended to continuous time and discrete space, namely, to coupled linear differential equations.  The result here applies to any combination of constant exponential growth or decay rates at different sites, and any constant pattern of movement between sites.  The extension of Karlin's theory to linear first order differential equations brings a much wider domain of systems into its purview.  One can contemplate a variety of systems that contain the applicable ingredients, shown in Table \ref{Table:List}.

%%%%%%%%%%%%%%%%%%%%%%%%%%%%%%%%%%%%%
\section{The System}

The system investigated here is of the form 
\eb
\df{\x(t)}{t} = (\Grm + m \Mvm)  \ \x(t),\label{eq:DiffEq}
\ee
which has solution
\eb
\x(t) = e^{(\Grm + m \Mvm)\, t}\ \x(0), \label{eq:DiffEqSolution}
\ee
where
\bd
\item[$\x(t) \in \Reals^n$] is an $n$-long vector of the quantities in each site at time $t$, 
\item[$\Grm$] is a real  $n \times n$ diagonal matrix of the growth rates at each site,
\item[$m \geq 0$] is the global rate of mixing between sites, and
\item[$\Mvm$] is a real $n \times n$ matrix that represents the movement distribution among $n$ sites. $\A$ is an \emph{essentially non-negative} matrix (also called a \emph{Metzler}, \emph{Metzler-Leontief}, or \emph{ML-matrix}), defined by $M_{ij} \geq 0$ for $i \neq j$.
\ed

The matrices $\Grm$ and $\Mvm$ hold the specifics for a particular system.  The diagonal matrix $\Grm$ contains the growth rates, and under the action of $\Grm$ alone, the time trajectories \eqref{eq:DiffEqSolution} would be 
\aligno{
x_i(t) = e^{\Gr_i t} x_i(0).
}
Exponential growth or decay is determined by whether $\Gr_i > 0$ or $\Gr_i < 0$.

The matrix $\Mvm$ represents movement between sites (or transformations between states).  The form of variation in movement examined here is of the form:
\eb\label{eq:MixVariation}
\Am(m) = \Grm + m \ \Mvm,
\ee
where $\Mvm$ represents the \emph{distribution} of movement, while $m$ represents the \emph{rate} of movement.   The question I address here is how the global level of `mixing', $m$, affects the asymptotic growth rates of \eqref{eq:DiffEqSolution}, and in particular, the stability of the zero solution $\x(t) = 0$ to perturbations.

The form \eqref{eq:MixVariation} includes, as a special case, the form considered by \citet{Karlin:1982}, in which a period of growth is followed by a period of movement: 
\eb\label{eq:KarlinsCase}
\Am(m)= [(1-m) \I + m \P ] \, \Grm = \Grm + m (\P \Grm -  \Grm),
\ee
where 
\bd
\item[$\I$] is the identity matrix, and 
\item[$\P$] is a stochastic matrix.
\ed
In continuous time, both growth and movement occur simultaneously, so the analog to \eqref{eq:KarlinsCase} is:
\eb\label{eq:NewCase}
\Am(m) = \Grm + m (\P - \I).
\ee

A typical assumption about movement is that quantity is redistributed but conserved, in which case summing the effect of movement over all destinations produces zero net change:  
\eb\label{eq:Conservation}
\ev^\tp \Mvm = \ev^\tp (\P \Grm -  \Grm) = \0, \mbox{\ and \ } \ev^\tp (\P - \I) = \0,
\ee
where 
\bd
\item[$\ev$] is the vectors of ones, and 
\item[$\scriptstyle \tp$] is the transpose of the vector or matrix.  
\ed
This class includes the generator matrices of continuous time Markov chains (also called `intensity' matrices).

If quantity is lost during movement, \eqref{eq:Conservation} is replaced by 
\eb\label{eq:Loss}
\ev^\tp \Mvm = \ev^\tp (\P \Grm -  \Grm) \leq \neq \0, \mbox{\ and \ } \ev^\tp (\P - \I) \leq \neq  \0.
\ee

%%%%%%%%%%%%%%%%%%%%%%%%%%%%%%%%%%%%
\section{Results}

The general phenomenon to be shown is the following:
\begin{Main}The asymptotic growth rate of
\[
\x(t) = e^{(\Grm + m \Mvm)\, t}\ \x(0),
\]
decreases with increasing values of the mixing parameter, $m$.  If this system exhibits net growth, then greater mixing inhibits the rate of growth.  If the system exhibits net decay, then greater mixing enhances the rate of decay.
\end{Main}

The asymptotic rate of growth or decay in \eqref{eq:DiffEqSolution} is given by the \emph{spectral abscissa} of $\Am(m)$, which is the largest real part of any eigenvalue of $\Am(m)$ (\citealt[p. 734]{Bernstein:2009}, \citealt[pp. 125-129]{Gantmacher:1959vol1}).  For real matrices that have non-negative off-diagonal elements (ML-matrices), the spectral abscissa is always an eigenvalue, referred to at the \emph{Perron root}. Irreducible ML-matrices retain many of the properties of irreducible non-negative matrices, including the existence of positive left and right eigenvectors (the Perron vectors), unique up to scaling, that are associated with the spectral abscissa (the Perron root) \citep[pp. 46--47]{Seneta:1981}.

This paper considers how variation in the global mixing rate $m$ varies the Perron root of $\Am(m)$.

\begin{Definitions}
Let:
\bd
\item [$\ev_i$] be the vector with element $i$ equal to $1$, and other elements equal to zero;
\item [$\pfe{\M} \eqdef \max_i \Realpart{\lambda_i(\M)}$] refer the spectral abscissa of a square matrix $\M$;
\item [$\vvf(\A) > 0$] refer to the right Perron vector of an irreducible ML-matrix $\A$, normalized so that $\ev^\tp \vvf(\A) = 1$; 
\item [$\uvf(\A)^\tp >0$] refer to the left Perron vector of an irreducible ML-matrix $\A$, normalized so that $\uvf(\A)^\tp \vvf(\A) = 1$ (the different normalization is convenient later);
So,
\aligno{
\uvf(\A)^\tp \A &= \pfe{\A} \ \uvf(\A)^\tp,\\
 \A \vvf(\A) &= \pfe{\A} \ \vvf(\A),  \mbox{\ and \ }\\
  \uvf(\A)^\tp \A \ \vv(\A) &= \pfe{\A}.
}
\ed
\end{Definitions}

%%%%%%%%%%%%%%%%%%%%%%%%%%%%%%%%%
\subsection{Basic Results}
The main result that will be used in the analysis is the following: 
%%%%%%%%%%%%%%%%
\begin{Theorem}\label{Theorem:Basic}
Let $\A$ be an $n \times n$ irreducible real matrix with non-negative off-diagonal elements (an ML-matrix), and $\D$ be an $n \times n$ diagonal real matrix.  Then:
\alignit{ \label{eq:Basic}
\pfe {\A + \D} - \pfe{\A}  \leq  \uvf(\A + \D)^\tp  \ \D \ \ \vvf(\A + \D)  
}
with equality if and only if $\D = c \ \I$ for some $c \in \Reals$.
\end{Theorem}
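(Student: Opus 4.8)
The plan is to treat the Perron root of an irreducible ML-matrix as a variational quantity and differentiate along the line segment joining $\A$ to $\A+\D$. Define $\phi(t) := \pfe{\A + t\D}$ for $t \in [0,1]$. Since $\A + t\D$ is irreducible (the off-diagonal pattern is unchanged), $\phi(t)$ is a simple, analytic eigenvalue of $\A+t\D$ with positive left and right Perron vectors $\uvf(t) := \uvf(\A+t\D)$ and $\vvf(t) := \vvf(\A+t\D)$, normalized as in the Definitions. The standard eigenvalue perturbation formula gives
\aligno{
\phi'(t) = \uvf(t)^\tp \, \D \, \vvf(t).
}
So the claimed inequality \eqref{eq:Basic} is exactly $\phi(1) - \phi(0) \le \phi'(1)$, i.e.\ $\int_0^1 \phi'(t)\,dt \le \phi'(1)$. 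This reduces the whole theorem to showing that $\phi'$ is \emph{nondecreasing} on $[0,1]$ — equivalently, that $\phi$ is \emph{convex} in $t$ — with strictness unless $\D$ is a scalar multiple of $\I$ (in which case $\phi(t) = \pfe{\A} + ct$ is affine and both sides equal $c$).

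The key step is therefore the convexity of $t \mapsto \pfe{\A + t\D}$. I would establish this from a Collatz--Wielandt / Rayleigh-type variational characterization of the Perron root of an irreducible ML-matrix: for such a matrix $\B$,
\aligno{
\pfe{\B} = \max_{\xv > 0} \ \min_i \ \frac{(\B\xv)_i}{x_i} = \min_{\xv > 0} \ \max_i \ \frac{(\B\xv)_i}{x_i}.
}
Writing $x_i = e^{y_i}$ and using the substitution that turns the ratio $(\B\xv)_i/x_i$ into $\sum_j B_{ij} e^{y_j - y_i}$, the function $\xv \mapsto \max_i (\B\xv)_i/x_i$ becomes, in the $\yv$ coordinates, a pointwise maximum of functions that are convex in $(\B,\yv)$ jointly — because each summand $B_{ij} e^{y_j-y_i}$ is linear in $B_{ij}$ and convex (log-convex, in fact) in $\yv$, and the diagonal term contributes a linear piece. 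Since $\B = \A + t\D$ is affine in $t$, the inner expression is convex in $(t,\yv)$; taking the minimum over $\yv$ of a jointly convex function yields a convex function of $t$. This gives $\phi(t) = \min_{\yv} g(t,\yv)$ with $g$ jointly convex, hence $\phi$ convex.

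The main obstacle — and the place requiring the most care — is the \emph{strictness} claim: that the inequality is strict unless $\D = c\I$. Convexity of $\phi$ alone only yields the non-strict inequality; I must rule out the degenerate case where $\phi$ is affine on $[0,1]$ while $\D \neq c\I$. The cleanest route is to compute $\phi''(t)$ explicitly via second-order perturbation theory. Differentiating $\phi'(t) = \uvf(t)^\tp \D \vvf(t)$ and using the resolvent/reduced-resolvent formula for the derivatives of the Perron vectors, one obtains
\aligno{
\phi''(t) = 2\, \uvf(t)^\tp \, \D \, S(t) \, \D \, \vvf(t),
}
where $S(t)$ is the reduced resolvent of $\A + t\D$ at the eigenvalue $\phi(t)$ (the group inverse of $\phi(t)\I - (\A+t\D)$, acting on the complement of the Perron eigenspace). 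One then shows this quadratic form is strictly positive whenever $\D\vvf(t)$ is not proportional to $\vvf(t)$, i.e.\ whenever $\D$ does not act as a scalar on the (positive, hence non-sparse) Perron vector; and $\D\vvf(t) = c\,\vvf(t)$ with $\vvf(t)>0$ forces $\D = c\I$. Establishing positivity of that quadratic form is the crux: it follows from writing $S(t)$ in a spectral-like form over the non-Perron part and invoking a Perron--Frobenius argument (or, alternatively, from a careful second-derivative analysis of the convex representation above, where strict convexity fails only if the minimizing $\yv$ is constant along the relevant direction, again forcing $\D$ scalar). Once $\phi'' > 0$ on $(0,1)$ in the non-degenerate case, $\phi(1) - \phi(0) = \phi'(\xi)$ for some $\xi \in (0,1)$ by the mean value theorem, and $\phi'(\xi) < \phi'(1)$ by strict monotonicity of $\phi'$, giving the strict inequality.
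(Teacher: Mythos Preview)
Your route via convexity of $\phi(t)=\pfe{\A+t\D}$ is genuinely different from the paper's and does establish the non-strict inequality cleanly: the partial-minimization argument (writing $\pfe{\B}=\min_{\yv}\max_i\sum_j B_{ij}e^{y_j-y_i}$ as an infimum of a jointly convex function) is correct, and $\phi(1)-\phi(0)\le\phi'(1)$ then follows. The paper instead uses the Donsker--Varadhan--Friedland $\sup_{\p}\inf_{\x}$ formula directly: it evaluates the inner expression for $\A+\D$ at the ``wrong'' test vector $\y=\vvf(\A)$ (rather than $\vvf(\A+\D)$), and the uniqueness of the infimizer immediately yields both the inequality and the sharp equality condition $\vvf(\A)=\vvf(\A+\D)\Leftrightarrow\D=c\I$. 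In fact the paper's logical order is the reverse of yours: its Corollary~\ref{Corollary:ConvexRelated} \emph{derives} the Cohen--Friedland convexity inequality from Theorem~\ref{Theorem:Basic}, whereas you are trying to derive Theorem~\ref{Theorem:Basic} from convexity.

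The genuine gap is the equality case. You correctly identify that you need \emph{strict} convexity when $\D\neq c\I$, but your argument for it is only a sketch. The second-derivative formula $\phi''(t)=2\,\uvf(t)^\tp\D\,S(t)\,\D\,\vvf(t)$ is fine, but for a non-symmetric ML-matrix the reduced resolvent $S(t)$ has no positive-definiteness in any obvious sense: the non-Perron spectrum may be complex, the spectral projectors are oblique, and ``invoking a Perron--Frobenius argument'' does not by itself show the bilinear form $\uvf^\tp\D S\D\vvf$ is strictly positive whenever $\D\vvf\not\propto\vvf$. Your alternative (tracing strictness through the minimizing $\yv$ in the convex representation) is likewise only asserted. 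What you actually need here is precisely Friedland's (1981) strict-convexity theorem, which the paper cites but does not reprove; if you are willing to invoke it as a black box, your argument closes, but then the paper's one-step variational proof is both shorter and self-contained for the equality characterization.
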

%%%%
\begin{proof}
The spectral abscissa of an irreducible ML-matrix, $\A $, is its Perron root, which is given by this variational formula (\citealt[Corollary 3.1]{Friedland:1981}, related to the variational formula of \citet{Donsker:and:Varadhan:1975}, and shown to extend to ML-matrices in \citealt[Lemma 3]{Altenberg:2009:Linear}):
\eb \label{eq:Friedland}
\pfe { \A } =  \sup_{\pv \in \Pc_n} \inf_{\x > \0} \sum_{i=1}^n p_i  \frac{[\A \x]_i}{x_i},
\ee
where $\Pc_n = \{\p \suchthat p_i \geq 0, \sum_{i=1}^n p_i = 1\} \subset \Reals^n$ .  

Let $\x(\A)$ and $\p(\A)$ be the vectors, as functions of $\A$, for which the $\sup$ and $\inf$ are attained, where $\x(\A)$ is also normalized so that $\sum_{i=1}^n x_i(\A) = 1$.  Then $\p (\A) $ and $\x (\A) $ are unique critical points for a given $\A$ (\citealt{Friedland:and:Karlin:1975}, \citealt{Friedland:1981}, and \citealt[p. 195]{Karlin:1982}),
\alignit{
\x (\A) &= \vvf(\A), \label{eq:X=V}
\intertext{and} 
\p(\A) &= \uvf(\A) \circ \vvf(\A) \label{eq:P=UV},
}
where $\circ$ is element-wise the Schur-Hadamard product. 

As utilized in the proof in \citet[Theorem 5.2]{Karlin:1982}, since $\x(\A)$ is a unique critical point in \eqref{eq:Friedland}, the $\inf$ means that any $\y \neq \x(\A)$ produces:
\alignit{\label{eq:Inf}
\pfe { \A }
 =  & \sup_{\pv \in \Pc_n} \inf_{\x > \0} \sum_{i=1}^n p_i  \frac{[\A \x]_i}{x_i} \notag \\
&< \sum_{i=1}^n p_i(\A)  \frac{[\A \y]_i}{y_i}.
}

Repeating the analogous step in the proof in \citet[Theorem 5.2]{Karlin:1982}, let $\y = \vvf(\A)$.  Then 
\aligno{
\sum_{i=1}^n &p_i(\A+\D)  \frac{[(\A+ \D) \vvf(\A)]_i}{v_i(\A)} \\
&= \pfe{\A} \sum_{i=1}^n p_i(\A+\D)  \frac {v_i(\A)}{v_i(\A)}
+  \sum_{i=1}^n p_i(\A+\D)  \frac{[ \D \vvf(\A)]_i}{v_i(\A)} \\
&= \pfe{\A}  +  \sum_{i=1}^n p_i(\A+\D)  D_i \\
&= \pfe{\A} + \uvf(\A+\D)^\tp \D \, \vvf(\A+\D).
}
Hence
\aligno{
\pfe{\A} + \uvf(\A+\D)^\tp \D \, \vvf(\A+\D) \geq \pfe{\A+\D},
}
with equality if and only if $\vvf(\A) = \vvf(\A+\D)$, which entails 
\aligno{
(\A+\D) \vvf(\A) &= \pfe{\A+\D} \vvf(\A) = \A\vvf(\A)+\D \vvf(\A) \\
&= \pfe{\A} \vvf(\A) + \D \vvf(\A),
}
hence $[ \pfe{\A+\D} -  \pfe{\A} ] \I \vvf(\A) = \D \vvf(\A)$;  and since $\vvf(\A) > \0$, this implies $ \D  = c \ \I$, where $c =  \pfe{\A+\D} -  \pfe{\A}$.
\end{proof}

From Theorem \ref{Theorem:Basic} a number of ancillary results can be obtained:
%%%%%%%%%%%%%%%
\begin{Corollary} [Convexity Derived] 
\label{Corollary:ConvexRelated}
Let $\A$ be an irreducible ML-matrix, and $\D$ a real diagonal matrix.  Then, for $\beta > 1$:
\alignit{ \label{eq:ConvexRelated}
\pfe { \A + \D} - \pfe{ \A} 
\leq \pfe{\frac{1}{\beta} \A + \D} - \pfe{\frac{1}{\beta}\A} 
\leq \uvf(\A + \beta \D)^\tp  \ \D \ \ \vvf (\A + \beta\D) ,
}
with equality if and only if $\D = c \ \I$ for some $c \in \Reals$.
\end {Corollary}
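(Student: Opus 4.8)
The plan is to deduce both inequalities of \eqref{eq:ConvexRelated} from one ``supporting-line'' consequence of Theorem~\ref{Theorem:Basic}, combined with the degree-one homogeneity $\pfe{c\,\M}=c\,\pfe{\M}$ for $c>0$, under which the normalized Perron vectors of $c\,\M$ and $\M$ coincide. Set $\phi(t)\eqdef\pfe{\A+t\D}$ and $c(t)\eqdef\uvf(\A+t\D)^\tp\,\D\,\vvf(\A+t\D)$, and note that $\A+t\D$ is an irreducible ML-matrix for every real $t$, since a diagonal shift changes neither the off-diagonal sign pattern nor irreducibility. Applying Theorem~\ref{Theorem:Basic} with base matrix $\A+t_2\D$ and diagonal perturbation $(t_1-t_2)\D$ gives, for all real $t_1,t_2$,
\[
\phi(t_2)\ \geq\ \phi(t_1)+(t_2-t_1)\,c(t_1),
\]
with equality if and only if $(t_1-t_2)\D$ is a scalar multiple of $\I$, i.e.\ (for $t_1\neq t_2$) if and only if $\D=c\,\I$. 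In other words $\phi$ is convex in $t$---strictly unless $\D=c\,\I$---with $c(t)$ a subgradient at each $t$; this is a known property of the Perron root, but here it falls straight out of Theorem~\ref{Theorem:Basic}.

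Everything then follows by specializing $(t_1,t_2)$ to pairs drawn from $\{0,1,\beta\}$. Choosing $(t_1,t_2)=(1,0)$ gives $c(1)\geq\phi(1)-\phi(0)$. Choosing $(t_1,t_2)=(1,\beta)$ gives $\phi(\beta)\geq\phi(1)+(\beta-1)\,c(1)$; since $\beta-1>0$, substituting the previous bound yields $\phi(\beta)-\phi(0)\geq\beta\bigl(\phi(1)-\phi(0)\bigr)$, and dividing by $\beta$ while rewriting $\frac{1}{\beta}\bigl(\pfe{\A+\beta\D}-\pfe{\A}\bigr)=\pfe{\frac{1}{\beta}\A+\D}-\pfe{\frac{1}{\beta}\A}$ by homogeneity gives the left inequality of \eqref{eq:ConvexRelated}. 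Choosing $(t_1,t_2)=(\beta,0)$ gives $c(\beta)\geq\frac{1}{\beta}\bigl(\phi(\beta)-\phi(0)\bigr)$; the right-hand side is again $\pfe{\frac{1}{\beta}\A+\D}-\pfe{\frac{1}{\beta}\A}$, while $c(\beta)=\uvf(\A+\beta\D)^\tp\D\,\vvf(\A+\beta\D)$ because $\frac{1}{\beta}\A+\D=\frac{1}{\beta}(\A+\beta\D)$ has the same Perron vectors as $\A+\beta\D$---this is the right inequality (equivalently, apply Theorem~\ref{Theorem:Basic} verbatim to the pair $\frac{1}{\beta}\A$, $\D$).

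For the equality clause, each of the three specializations above is an equality only when $\D=c\,\I$, so both inequalities of \eqref{eq:ConvexRelated} are strict unless $\D=c\,\I$; conversely, when $\D=c\,\I$ all three expressions in \eqref{eq:ConvexRelated} reduce to $c$ (using $\pfe{\M+c\,\I}=\pfe{\M}+c$ and the normalization $\uvf(\A)^\tp\vvf(\A)=1$), so equality holds throughout. I expect the only real care to be needed in the middle step---chaining the two subgradient specializations while keeping the direction of the inequality under multiplication by $\beta-1>0$---and in checking that the ``reversed'' application of Theorem~\ref{Theorem:Basic} (base matrix $\A+t_2\D$, perturbation $(t_1-t_2)\D$) is legitimate precisely because a diagonal shift preserves the ML and irreducibility hypotheses; the homogeneity rewriting is routine.
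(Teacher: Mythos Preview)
Your proof is correct. The paper takes a different route: it invokes Cohen's (1979) convexity result $(1-\alpha)\pfe{\A}+\alpha\pfe{\A+\D}\geq\pfe{\A+\alpha\D}$ as an external black box to obtain the left inequality, applies Theorem~\ref{Theorem:Basic} separately for the right inequality, and then performs a change of variable $\beta=1/\alpha$, $\D'=\alpha\D$ to reach the stated form. Your approach is more self-contained: by reading Theorem~\ref{Theorem:Basic} as a subgradient inequality $\phi(t_2)\geq\phi(t_1)+(t_2-t_1)c(t_1)$ for $\phi(t)=\pfe{\A+t\D}$, you recover the convexity of $t\mapsto\pfe{\A+t\D}$ directly, so Cohen's result becomes a consequence rather than an input. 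The cost is a slightly more delicate chaining of two subgradient specializations for the left inequality (the paper gets it in one step from Cohen's theorem), but the gain is that the entire corollary---including the strict-inequality clause, which the paper pulls from a separate result of Friedland---is shown to follow from Theorem~\ref{Theorem:Basic} alone.
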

\begin{proof}
\citet{Cohen:1979:MPCBS} established the convexity relation:
\alignit{\label{eq:Cohen}
(1-\alpha) \pfe{\A} + \alpha \pfe{\A + \D} \geq \pfe{ (1-\alpha) \A + \alpha (\A + \D)}
}
for non-negative $\A$ and $\D$, and $0 < \alpha < 1$.   \citet{Friedland:1981} showed that when $\A$ is irreducible, equality holds if and only if $\D = c \ \I$ for some $c \in \Reals$.  This holds when $\A$ and $\D$ are ML-matrices by the relation $\pfe{\A + c \I} - c = \pfe{\A}$, since large enough $c$ will guarantee $\A + c \I \geq \0$.

Rearrangement of \eqref{eq:Cohen} gives: 
\aligno{
\pfe{\A + \D}& - \pfe{\A} \geq \\
& \frac{1}{\alpha} \left [ \pfe{ (1-\alpha) \A + \alpha (\A + \D) } -  \pfe{\A} \right ] \\
&=  \left [ \pfe{\frac{1}{\alpha}  \A + \D } -  \pfe{\frac{1}{\alpha}\A} \right ].
}
Application of \eqref{eq:Basic} gives
\aligno{
\uv(\A+\D)^\tp \D \vv(\A+\D) 
\geq  \pfe{\A + \D} - \pfe{\A} 
\geq  \pfe{\frac{1}{\alpha}  \A + \D } -  \pfe{\frac{1}{\alpha}\A}. 
}
The condition for equality in both places is that $\D = c\ \I$ for some $c \in \Reals$.  

Multiplying by $\alpha$, letting $\D' \eqdef \alpha \D$, and $\beta \eqdef 1/\alpha$:
\aligno{
&\uvf(\A + \D)^\tp  \ \alpha \D \ \ \vvf(\A + \D) 
\geq  \alpha \pfe{\A + \D} - \alpha \pfe{\A} 
\geq \pfe { \A + \alpha \D} - \pfe{ \A} \\
& \iff \\
&\uvf(\A + \beta \D')^\tp  \ \D' \ \ \vvf (\A + \beta \D') 
\geq  \pfe{\frac{1}{\beta} \A + \D'} - \pfe{\frac{1}{\beta}\A} 
\geq \pfe { \A + \D'} - \pfe{ \A}.
}
Replacement of $\D'$ by $\D$ gives \eqref{eq:ConvexRelated}.
\end{proof}

%%%%%%%%
\begin{Corollary}\label{Corollary:Basic}
For irreducible ML-matrix $\A$ and real diagonal matrix $\D$:
\alignit{\label{eq:CorollaryBasic}
\uvf(\A + \D)^\tp \A \ \vvf(\A + \D) \leq \pfe{\A},
}
with equality if and only if $\D = c \ \I$ for some $c \in \Reals$.
\end{Corollary}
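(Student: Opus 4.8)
The plan is to obtain \eqref{eq:CorollaryBasic} directly from Theorem~\ref{Theorem:Basic} by exploiting the defining property of the Perron vectors of $\A + \D$. First I would sandwich $\A + \D$ between its own left and right Perron vectors: since $\uvf(\A+\D)^\tp (\A+\D) = \pfe{\A+\D}\,\uvf(\A+\D)^\tp$ and the normalization in the Definitions gives $\uvf(\A+\D)^\tp \vvf(\A+\D) = 1$, we get the scalar identity
\[
\uvf(\A+\D)^\tp \A\, \vvf(\A+\D) + \uvf(\A+\D)^\tp \D\, \vvf(\A+\D) = \pfe{\A+\D}.
\]

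Second, I would solve this for the quantity of interest,
\[
\uvf(\A+\D)^\tp \A\, \vvf(\A+\D) = \pfe{\A+\D} - \uvf(\A+\D)^\tp \D\, \vvf(\A+\D),
\]
and then invoke Theorem~\ref{Theorem:Basic}, which asserts precisely that $\pfe{\A+\D} - \uvf(\A+\D)^\tp \D\, \vvf(\A+\D) \le \pfe{\A}$. Chaining the two lines yields \eqref{eq:CorollaryBasic} at once. For the equality clause I would note that equality in \eqref{eq:CorollaryBasic} holds exactly when equality holds in the inequality of Theorem~\ref{Theorem:Basic}, i.e. when $\D = c\,\I$; conversely, if $\D = c\,\I$ then $\uvf(\A+\D) = \uvf(\A)$ and $\vvf(\A+\D) = \vvf(\A)$, so the left side reduces to $\uvf(\A)^\tp \A\, \vvf(\A) = \pfe{\A}$, showing the condition is also sufficient.

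There is essentially no technical obstacle here — the corollary is an algebraic rearrangement of Theorem~\ref{Theorem:Basic}. The only point needing care is the normalization: one must use $\uvf(\A+\D)^\tp \vvf(\A+\D) = 1$ so that the cross term $\pfe{\A+\D}\,\uvf(\A+\D)^\tp \vvf(\A+\D)$ collapses to $\pfe{\A+\D}$. An alternative, essentially equivalent, route would be to start from the variational characterization \eqref{eq:Friedland} directly, evaluating $\sum_i p_i(\A+\D)\,[\A\vvf(\A+\D)]_i / v_i(\A+\D)$ and using the uniqueness of the critical point as in the proof of Theorem~\ref{Theorem:Basic}; but piggy-backing on the already-proved theorem is shorter and makes the equality condition immediate.
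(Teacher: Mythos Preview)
Your proposal is correct and matches the paper's own proof essentially step for step: both expand $\pfe{\A+\D} = \uvf(\A+\D)^\tp(\A+\D)\vvf(\A+\D)$, split off the $\D$ term, and then invoke Theorem~\ref{Theorem:Basic} to bound the remaining $\A$ term by $\pfe{\A}$, inheriting the equality condition directly. Your added remark verifying the converse direction of the equality clause is a small bonus the paper leaves implicit.
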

%%%%
\begin{proof}
From Theorem \ref {Theorem:Basic}:
\aligno{
\uvf(\A + \D)^\tp& \D \ \vvf(\A + \D)    \\
\geq & \pfe {{\A + \D}} - \pfe{\A} \\
&= \uvf(\A + \D)^\tp \ (\A+\D) \ \vvf(\A + \D) - \pfe{\A} \\
&= \uvf(\A + \D)^\tp \A \ \vvf(\A + \D) \\
&\qquad + \uvf(\A + \D)^\tp \D \ \vvf(\A + \D) - \pfe{\A},
\intertext{and rearranging,}
\pfe{\A}  \geq & \  \uvf(\A + \D)^\tp \A \ \vvf(\A + \D).
}
The equality condition is unchanged from Theorem \ref{Theorem:Basic}.
\end{proof}

%%%%%%%%%%%%%%%
\begin{Corollary}[Sums]\label{Corollary:Sums}
Let $\D$ be a real diagonal matrix, and let 
$
\A =  \sum_{k=1}^N \Mvm_k,
$
where $\{\Mvm_k \}$ are ML-matrices that share a common right [left] Perron vector.  Then
\eb\label{eq:CorollarySums}
 \uv(\A+\D)^\tp \A \ \vv(\A+\D) 
\leq \sum_{k=1}^N \pfe{\Mvm_k},
\ee
with equality if and only if $\D = c \ \I$ for some $c \in \Reals$.
\end{Corollary}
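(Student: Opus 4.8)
The plan is to reduce \eqref{eq:CorollarySums} to Corollary~\ref{Corollary:Basic} by exploiting the shared Perron vector to turn the spectral abscissa into an \emph{additive} quantity over the summands $\Mvm_k$. I would treat the right-Perron-vector hypothesis explicitly; the left-vector case then follows by transposition.

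First I would note that $\A = \sum_{k=1}^N \Mvm_k$ is again an ML-matrix, since a sum of matrices with non-negative off-diagonal entries has non-negative off-diagonal entries, and that it is irreducible, because its off-diagonal support contains that of each (irreducible) $\Mvm_k$; hence Corollary~\ref{Corollary:Basic} is applicable to $\A$. Let $\vv^\star > \0$ be the common right Perron vector, so $\Mvm_k \vv^\star = \pfe{\Mvm_k}\,\vv^\star$ for every $k$. Summing these $N$ eigenvector equations,
\[
\A \vv^\star = \sum_{k=1}^N \Mvm_k \vv^\star = \Big(\sum_{k=1}^N \pfe{\Mvm_k}\Big)\vv^\star ,
\]
so $\A$ possesses a strictly positive eigenvector; since the only eigenvalue of an irreducible ML-matrix admitting a positive eigenvector is its Perron root, this forces the exact identity $\pfe{\A} = \sum_{k=1}^N \pfe{\Mvm_k}$ (and, up to normalization, $\vv(\A) = \vv^\star$).

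Now I would apply Corollary~\ref{Corollary:Basic} to this $\A$ together with the given real diagonal $\D$:
\[
\uv(\A+\D)^\tp \A \ \vv(\A+\D) \ \leq\ \pfe{\A} \ =\ \sum_{k=1}^N \pfe{\Mvm_k},
\]
which is precisely \eqref{eq:CorollarySums}. The second relation is always an equality, and Corollary~\ref{Corollary:Basic} gives equality in the first exactly when $\D = c\,\I$ for some $c \in \Reals$, so that is the equality condition for \eqref{eq:CorollarySums}. For the left-vector hypothesis, the common left Perron vector $\uv^{\star\tp}$ gives $\uv^{\star\tp}\A = \big(\sum_k \pfe{\Mvm_k}\big)\,\uv^{\star\tp}$, hence the same identity $\pfe{\A}=\sum_k \pfe{\Mvm_k}$, and the remainder of the argument is unchanged.

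There is essentially no deep obstacle here: the substance is the additivity identity $\pfe{\A} = \sum_k \pfe{\Mvm_k}$, and the one place requiring care is verifying that $\A$ is genuinely an irreducible ML-matrix, so that (i) Corollary~\ref{Corollary:Basic} applies, and (ii) the positive eigenvector $\vv^\star$ is forced to belong to $\pfe{\A}$ rather than to some other eigenvalue. Once that is pinned down, Corollary~\ref{Corollary:Basic} supplies both the inequality and its equality characterization.
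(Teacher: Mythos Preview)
Your proposal is correct and follows essentially the same route as the paper: establish the additivity identity $\pfe{\A}=\sum_k \pfe{\Mvm_k}$ via the common Perron vector, then invoke Corollary~\ref{Corollary:Basic} for both the inequality and the equality condition. You supply more justification than the paper does (verifying $\A$ is an irreducible ML-matrix and that a positive eigenvector forces the eigenvalue to be the Perron root), which is appropriate; the paper's proof simply asserts $\A\y = \pfe{\A}\y$ and substitutes into \eqref{eq:CorollaryBasic}.
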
 
%%%%%%%%
\begin{proof}
Letting $\y$ be the right common Perron vector, then 
\[
\A \y = \pfe{\A} \y = \sum_{k=1}^N \Mvm_i \y = \y \sum_{k=1}^N \pfe{\Mvm_i},
\]
thus $\pfe{\A} =  \sum_{k=1}^N \pfe{\Mvm_i}$, and substitution in \eqref{eq:CorollaryBasic} yields \eqref{eq:CorollarySums}.  For $\y$ equal to a common left Perron vector, $\A^\tp$ is used.
\end{proof}

%%%%%%%%%%%%%%%
\begin{Corollary} [The `Flip' Theorem \protect{\citep[Theorem 3.2.5]{Bapat:and:Raghavan:1997}}] 
\label{Corollary:Flip}
Let $\A$ be an irreducible ML-matrix.  Let ${\y \circ \z = \uv(\A) \circ \vv(\A)}$, with $\y > \0$, $\z > \0$.  Then
\aligno{
\z^\tp  \A \ \y \geq \uvf(\A)^\tp  \A  \ \vvf(\A) = \pfe{\A}. 
}
with equality if and only if $\y = \v(\A)$ or $\uv(\A) = \v(\A)$.

In particular, when the left and right Perron vectors are `flipped',
\aligno{
\vvf(\A)^\tp   \A  \ \uvf(\A) > \uvf(\A)^\tp   \A  \ \vvf(\A) = \pfe{\A}, 
}
if $\uv(\A) \neq \v(\A)$.
\end {Corollary}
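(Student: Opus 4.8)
The plan is to re-use the Donsker--Varadhan/Friedland variational formula \eqref{eq:Friedland} exactly as in the proof of Theorem~\ref{Theorem:Basic}, but to feed in the ``flipped'' vector as the test point instead of a perturbed Perron vector. The one computation that makes this work is that the constraint $\y\circ\z=\uvf(\A)\circ\vvf(\A)$ turns the bilinear form $\z^\tp\A\,\y$ into the inner functional of \eqref{eq:Friedland} evaluated at $\y$ with the \emph{optimal} weight vector $\p(\A)$:
\aligno{
\z^\tp \A\, \y \;=\; \sum_{i=1}^n z_i\,[\A\y]_i \;=\; \sum_{i=1}^n (y_i z_i)\,\frac{[\A\y]_i}{y_i} \;=\; \sum_{i=1}^n p_i(\A)\,\frac{[\A\y]_i}{y_i},
}
since $y_i z_i = u_i(\A)\,v_i(\A) = p_i(\A)$ by \eqref{eq:P=UV}. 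This identity is the whole trick; everything after it is an appeal to facts already established for \eqref{eq:Friedland}.

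Next I would apply \eqref{eq:Inf}. Because $\p(\A)$ attains the outer supremum in \eqref{eq:Friedland}, the inner infimum $\inf_{\x>\0}\sum_i p_i(\A)\,[\A\x]_i/x_i$ equals $\pfe{\A}$ and is attained at $\x=\vvf(\A)$, which is, up to positive scaling, the unique minimizer. Hence for every $\y>\0$ one has $\sum_i p_i(\A)\,[\A\y]_i/y_i \ge \pfe{\A}$, with equality iff $\y$ is a positive multiple of $\vvf(\A)$. Combining this with the display above and with $\uvf(\A)^\tp\A\,\vvf(\A)=\pfe{\A}$ gives $\z^\tp\A\,\y \ge \uvf(\A)^\tp\A\,\vvf(\A)=\pfe{\A}$; and since the constraint forces $\z=(1/c)\,\uvf(\A)$ whenever $\y=c\,\vvf(\A)$, equality occurs exactly in the stated cases. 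For the ``in particular'' clause I would specialize to the genuine flip $\y=\uvf(\A)$, $\z=\vvf(\A)$, which indeed satisfies $\y\circ\z=\uvf(\A)\circ\vvf(\A)$; the main inequality then reads $\vvf(\A)^\tp\A\,\uvf(\A)\ge\pfe{\A}=\uvf(\A)^\tp\A\,\vvf(\A)$, with equality forcing $\uvf(\A)$ parallel to $\vvf(\A)$, so the inequality is strict whenever the left and right Perron vectors differ.

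The only point that needs genuine care is the equality analysis, which rests on the strictness in \eqref{eq:Inf}: one must know that, \emph{when the outer weights are the optimal $\p(\A)$}, the minimizer of the inner infimum is unique up to positive scaling. This is precisely the uniqueness of the saddle point of \eqref{eq:Friedland} for irreducible $\A$ that is invoked in the proof of Theorem~\ref{Theorem:Basic} (from \citealt{Friedland:and:Karlin:1975}, \citealt{Friedland:1981}, \citealt[p.~195]{Karlin:1982}); once that is granted, only the routine bookkeeping of the normalizations $\ev^\tp\vvf(\A)=1$ and $\uvf(\A)^\tp\vvf(\A)=1$ remains in order to state the equality condition in the form given. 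I would therefore not expect any serious obstacle beyond making sure the ``$\y$ proportional to $\vvf(\A)$'' characterization is correctly translated into the normalized statement, and noting that the flipped choice $\y=\uvf(\A)$ is admissible because entrywise products are symmetric in their factors.
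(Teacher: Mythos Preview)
Your approach is essentially identical to the paper's: both use the constraint $\y\circ\z=\uvf(\A)\circ\vvf(\A)$ to rewrite $\z^\tp\A\,\y$ as $\sum_i p_i(\A)\,[\A\y]_i/y_i$ via \eqref{eq:P=UV}, and then invoke the strict inequality \eqref{eq:Inf} from the Donsker--Varadhan/Friedland variational formula together with the uniqueness of the minimizer. The paper's proof is terser (it performs the substitution $y_i=u_i(\A)v_i(\A)/z_i$ directly inside \eqref{eq:Inf}), but the content is the same; your more explicit handling of the equality case and of the ``in particular'' flip is a harmless elaboration.
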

\begin{proof}
This is an alternative proof to that given in \citet {Bapat:and:Raghavan:1997}, and this extends Theorem 3.2.5 to ML-matrices.  

Substituting $y_i = u_i(\A) v_i(\A) / z_i$ and \eqref{eq:P=UV} into \eqref{eq:Inf}, one gets:
\aligno{
\pfe { \A } \leq & \sum_{i=1}^n p_i(\A)  \frac{[\A \y]_i}{y_i}
= \sum_{i=1}^n u_i(\A) v_i(\A)  \frac{[\A \y ]_i}{u_i(\A) v_i(\A) / z_i}  \\
& =  \sum_{i=1}^n z_i [\A \ y ]_i = \z^\tp \A \ \y,
}
with equality if and only if $\y= \vv(\A)$ or $\uv(\A) = \vv(\A)$.
\end{proof}

%%%%%%%%%%%%%%%%%%%%%%%%
\subsection{Main Result}
These results are now applied to extend Karlin's Theorem 5.2 \citep{Karlin:1982}.  Theorem 5.2 applies to matrices $\Am(m) = \Grm + m \ \Mvm$, where $\Grm$ is positive diagonal matrix, $\Mvm = (\P - \I) \Grm$, $\P$ is an irreducible stochastic matrix, and $0 \leq m \leq 1$.  Here, results are extended to $\Grm$ that may have negative diagonal elements, to arbitrary irreducible and reducible ML-matrices, $\Mvm$, and to any $m \geq 0$.

%%%%%%%%%%%%%%%%
\begin{Theorem}[Growth and Mixing] \label{Theorem:Main}
Let $\Grm$ be a real $n \times n$ diagonal matrix, and $\Mvm$ be a real $n \times n$ matrix with non-negative off-diagonal elements (an ML-matrix).  Then, for  $m \geq 0$:
\eb \label{eq:Main}
\df{ \pfe{ \Grm + m \Mvm }}{m} \leq  \pfe{\Mvm},
\ee
with equality holding if and only if either:
\benu
\item $\Grm = c \ \I$ for some $c \in \Reals$; or,
\item 
	\benu
	\item $\Mvm$ is reducible; and
	\item \label{GcIk} $\Grm_{\kappa} = c \ \I_{\kappa}$, for some $c \in \Reals$, for every $\kappa$ such that: 
		\benu
		\item $\kappa$ is an index on the  diagonal blocks in the Frobenius normal form of $\Mvm$, 
each block being defined by a subset of the indices of $\Mvm$, 
		\item $\Grm_\kappa \mbox{\it \ and } \I_\kappa$ are the restrictions of $\Grm$ and $\I$ under the block of indices derived from $\Mvm$, and
		\item \label{item:kappa} on an open neighborhood of $m$:  
\aligno{
\pfe{\Grm_{\kappa} + m \Mvm_{\kappa}}&= {max_{h}} \ \pfe{\Grm_h + m \Mvm_h}
= \pfe{\Grm + m \Mvm} .
}
		\eenu
	\eenu
\eenu
Moreover, for reducible $\Mvm$, with $\kappa$ defined as in \ref{item:kappa}, a sharper inequality obtains:
\[
\df{ \pfe {\Grm + m \Mvm}}{m} \leq  \pfe{\Mvm_{\kappa}} \leq  \pfe{\Mvm},
\]
with equality on the left side only under condition \ref{GcIk}.
\end{Theorem}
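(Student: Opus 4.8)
The plan is to combine the first-order perturbation formula for a simple eigenvalue with Corollary~\ref{Corollary:Basic}, and then to pass to the reducible case block-by-block through the Frobenius normal form of $\Mvm$.

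\emph{Irreducible $\Mvm$.} For every $m > 0$ the matrix $\Grm + m\Mvm$ is an irreducible ML-matrix (it has the off-diagonal zero-pattern of $m\Mvm$), so its Perron root $\pfe{\Grm+m\Mvm}$ is a simple eigenvalue, analytic in $m$, with positive Perron vectors $\uvf(\Grm+m\Mvm)$, $\vvf(\Grm+m\Mvm)$ normalized by $\uvf(\Grm+m\Mvm)^\tp \vvf(\Grm+m\Mvm) = 1$. Since $\df{(\Grm+m\Mvm)}{m} = \Mvm$ and the simple-eigenvalue perturbation formula under this normalization reads $\lambda'(m) = \uvf(\Grm+m\Mvm)^\tp \Mvm\, \vvf(\Grm+m\Mvm)$, I would record
$$\df{\pfe{\Grm + m\Mvm}}{m} = \uvf(\Grm+m\Mvm)^\tp \Mvm\ \vvf(\Grm+m\Mvm).$$
Then apply Corollary~\ref{Corollary:Basic} with $m\Mvm$ (irreducible for $m>0$) in the role of the irreducible ML-matrix and $\Grm$ in the role of the diagonal matrix: this gives $\uvf(m\Mvm+\Grm)^\tp (m\Mvm)\, \vvf(m\Mvm+\Grm) \leq \pfe{m\Mvm} = m\,\pfe{\Mvm}$, with equality iff $\Grm = c\,\I$. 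Dividing by $m > 0$ yields \eqref{eq:Main} together with case~1 of the equality characterization. The degenerate endpoint $m = 0$ — where $\Grm$ is merely diagonal and its Perron vectors need not be unique — is handled via the right-hand derivative, using continuity of the Perron root and the bound just established on $(0,\varepsilon)$.

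\emph{Reducible $\Mvm$.} Let $\Mvm_1,\dots,\Mvm_s$ be the irreducible diagonal blocks of a Frobenius normal form of $\Mvm$, and $\Grm_1,\dots,\Grm_s$ the corresponding diagonal blocks of $\Grm$ under the same partition of indices. Since the spectrum of a block-triangular matrix is the union of the spectra of its diagonal blocks, $\pfe{\Grm + m\Mvm} = \max_h \pfe{\Grm_h + m\Mvm_h}$ and $\pfe{\Mvm} = \max_h \pfe{\Mvm_h}$. Each $f_h(m) := \pfe{\Grm_h + m\Mvm_h}$ is analytic on $(0,\infty)$ by the irreducible case, so $m \mapsto \pfe{\Grm+m\Mvm} = \max_h f_h(m)$ is a maximum of finitely many analytic functions: its non-differentiability points are isolated, and on any interval on which a block $\kappa$ attains the maximum one has $\pfe{\Grm+m\Mvm} = f_\kappa(m)$, hence $\df{\pfe{\Grm+m\Mvm}}{m} = f_\kappa'(m)$. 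Applying the irreducible case to block $\kappa$ gives $f_\kappa'(m) \leq \pfe{\Mvm_\kappa}$, with equality iff $\Grm_\kappa = c\,\I_\kappa$, while $\pfe{\Mvm_\kappa} \leq \max_h \pfe{\Mvm_h} = \pfe{\Mvm}$; chaining these two steps is precisely the sharper inequality, with left-hand equality exactly under condition~\ref{GcIk}. At a crossing point the derivative is replaced by the two one-sided derivatives, each bounded by the block dominating on the respective side, and taking the union of the block-level equality conditions over all blocks $\kappa$ dominating on a neighborhood of $m$ reproduces the stated equality characterization~2.

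I expect the routine parts to be the perturbation formula and the bookkeeping of normalizations; the delicate part lies entirely in the reducible case — verifying that $\max_h f_h$ is piecewise analytic with only isolated non-smooth points, handling $m$ near $0$ and near eigenvalue crossings through one-sided derivatives, and matching the collection of block-level equality conditions to the precise wording of condition~2 (in particular keeping track of which dominating blocks $\kappa$ also realize $\pfe{\Mvm_\kappa} = \pfe{\Mvm}$). I anticipate this Frobenius-normal-form bookkeeping, rather than any new inequality, to be the main obstacle.
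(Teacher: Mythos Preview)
Your proposal is correct and follows essentially the same route as the paper: the irreducible case is handled via the derivative formula $\df{\pfe{\Grm+m\Mvm}}{m} = \uvf(m)^\tp \Mvm\, \vvf(m)$ together with Corollary~\ref{Corollary:Basic} applied to $m\Mvm$ and $\Grm$, and the reducible case is reduced block-by-block through the Frobenius normal form, using $\pfe{\Grm+m\Mvm} = \max_h \pfe{\Grm_h+m\Mvm_h}$ and applying the irreducible result to the dominating block $\kappa$. You are in fact slightly more explicit than the paper about the endpoint $m=0$ and about crossing points of the block functions $f_h$, but these are refinements of the same argument rather than a different approach.
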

%%%%%%%%

\begin{proof}
{\bf Case: Irreducible $\Mvm$.}
When $\Am(m)$ is a $C^2$ function of $m$, and is an irreducible ML-matrix on some open set around $m$, then the derivative of its spectral abscissa is \citep[Lemma 4]{Altenberg:2009:Linear}:
\aligno{
\df{ \pfe { \Am(m)} }{m} &= \sum_{i=1}^n p_i(m)  \frac{[ \df{\Am(m)}{m} \; \x(m)]_i}{x_i(m)}, \\
}
where for clarity, $\p(m) \eqdef \p(\Am(m))$ and $\x(m) \eqdef \x(\Am(m))$.  Using \eqref{eq:X=V} and \eqref{eq:P=UV} one obtains the classical form \citep[Sec. 9.1.1]{Caswell:2000}:
\aligno{
\df{ \pfe { \Am(m)} }{m} &= \uv(m)^\tp \df{\Am(m)}{m} \vv(m).
}
Hence, 
\aligno{
\df{ \pfe {\Grm + m \ \Mvm )} }{m} 
= \uv(m)^\tp \Mvm \ \vv(m).
}
Applying Corollary \ref{Corollary:Basic}:
\aligno{
 \uvf(m)^\tp (m \  \Mvm) \ \vvf(m) & \leq \pfe{m \ \Mvm}\\
\iff \uvf(m)^\tp  \Mvm \ \vvf(m) &\leq  \pfe{\Mvm},
\intertext{and thus}
\df{ \pfe{ \Grm + m \Mvm }}{m} &= \uvf(m)^\tp \Mvm \ \vvf(m) \leq  \pfe{\Mvm},
}
with equality if and only if $\Grm = c \ \I$ for some $c \in \Reals$.  

%%%%%%%%%%%%
{\bf Case: Reducible $\Mvm$.}
If $\Mvm$ is reducible, its Frobenius normal form is utilized.   The Frobenius normal form, $\bar{\A}$, of a reducible matrix $\A$  permutes the indices until it has the structure \citep[p. 75]{Gantmacher:1959vol2}:
\alignit{\label{eq:FrobeniusNormalForm}
\bar{\A} = 
%%%&\bar{\A} = \\
&\matrx{
\begin{array}{cccc|ccc}
\A_1 & \0 & \cdots & \0 & & & \\
\0 & \A_2 & \ddots & \vdots & &  \mbox{\Large \bf 0} & \\
\vdots &  & \ddots & \0 & &  & \\
\0 & \cdots & \0& \A_t & & & \\
\hline \A_{t+1,1} & \A_{t+1, 2} & \cdots & \A_{t+1,t} & \A_{t+1}  & \0 & \0\\
\vdots & \vdots & \cdots & \cdots & \cdots & \ddots & \ \0\\
\A_{t+s,1} & \A_{t+s, 2} & \cdots & \A_{t+s, t} & \A_{t+s, t+1} &\cdots &\A_{t+s}
\end{array}}
}
where the diagonal blocks\renewcommand{\A}{{\bm{A}}}
 $\A_h$ are irreducible square matrices.  
The eigenvalues of $\A$ are the eigenvalues of the irreducible diagonal block matrices $\A_h$
\citep[Lemma 5]{Altenberg:2009:Linear}.  Therefore, the spectral abscissa for $\Am(m) = \Grm + m \ \Mvm$ is the maximum of the spectral abscissae:
\[
\pfe{ \Am(m)}  = \max_{h} \pfe{ \Am_h(m)}.
\]
Hence, for every $\kappa$ that satisfies $\pfe{ \Am_{\kappa}(m )} = \max_{h} \pfe{ \Am_h(m)}$ on some neighborhood of $m$, the result on irreducible matrices yields:
\eb\label{eq:block}
\df{}{m} \pfe{ \Am(m)}  =  \df{}{m}  \pfe{ \Am_{\kappa}(m)}  \leq \pfe{\Mvm_{\kappa}},
\ee
with equality holding if and only if $\Grm_{\kappa} = c \ \I_{\kappa}$ for some $c \in \Reals$ for every $\kappa$.

Since $\pfe{\Mvm} = \max_h \pfe{\Mvm_h} \geq \pfe{\Mvm_{\kappa}}$, \eqref{eq:block} is seen to be a sharper inequality than \eqref {eq:Main}:
\[
\df{}{m} \pfe{ \Am(m)}  =  \df{}{m}  \pfe{ \Am_{\kappa}(m)}  \leq \pfe{\Mvm_{\kappa}} \leq \pfe{\Mvm}. \qedhere
\]
\end{proof}
{\bf Remark}:  Blocks $\A_1, \ldots, \A_t$ in \eqref{eq:FrobeniusNormalForm} are called \emph{isolated blocks}.  If $\x(0)$ is zero on any isolated block, it remains zero on that block for all $t\geq 0$.  Should $\pfe{\A}$ derive from isolated blocks, then the system will asymptotically grow at rate $\pfe{\A}$ only if $\x(0)$ has non-zero elements for one of those blocks.  Therefore, the asymptotic growth rate for reducible $\A$ may depend on the initial value $\x(0)$, whereas for irreducible $\A$ it is independent of any initial value $\x(0) \geq \neq \0$.

\subsection{Conservative and Lossy Mixing}
Inherent to the concepts of movement and mixing is the idea that the redistributed quantity is conserved or perhaps lost by the movement process, but never created.  Theorem \ref{Theorem:Main} is more general and does not assume this.  But when the assumption that movement is conservative or lossy is made, it yields the following results as special cases of Theorem \ref{Theorem:Main}:

%%%%%%%%%%%%%%%
\begin{Corollary} [Measure-Preserving Mixing] 
\label{Corollary:Preserving}
If ML-matrix $\A$ is such that mixing preserves the total measure of quantity, then
\aligno{
\df{ \pfe{ \Grm + m \Mvm }}{m} \leq 0,
}
with equality under the same conditions as Theorem \ref{Theorem:Main}.
\end {Corollary}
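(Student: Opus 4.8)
The plan is to reduce the Corollary to Theorem~\ref{Theorem:Main} by establishing that the measure-preserving hypothesis forces $\pfe{\Mvm}=0$. First I would pin down what ``mixing preserves the total measure of quantity'' means: under pure mixing the dynamics are $\df{\x(t)}{t}=m\,\Mvm\,\x(t)$, so the total quantity $\ev^\tp\x(t)$ obeys $\df{}{t}\,\ev^\tp\x(t)=m\,\ev^\tp\Mvm\,\x(t)$, and requiring this to vanish identically in $\x$ is precisely the conservation condition \eqref{eq:Conservation}, namely $\ev^\tp\Mvm=\0$. Hence $\Mvm^\tp$ is an ML-matrix all of whose rows sum to zero.

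Next I would evaluate $\pfe{\Mvm}$. Pick $c>0$ large enough that $\Mvm^\tp+c\,\I\geq\0$; this is a nonnegative matrix every row of which sums to $c$, so its spectral radius --- which for a nonnegative matrix coincides with its Perron root --- equals $c$, and therefore $\pfe{\Mvm^\tp}=\pfe{\Mvm^\tp+c\,\I}-c=0$. Since a matrix and its transpose share the same spectrum, $\pfe{\Mvm}=\pfe{\Mvm^\tp}=0$. (Equivalently, $e^{\Mvm^\tp t}$ is a stochastic matrix for every $t\geq 0$, hence has spectral radius $1$, which again gives $\pfe{\Mvm^\tp}=0$.) Note that this argument uses neither irreducibility nor any Perron-vector normalization, so it is valid whether or not $\Mvm$ is irreducible.

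Finally, substituting $\pfe{\Mvm}=0$ into \eqref{eq:Main} gives $\df{\pfe{\Grm+m\Mvm}}{m}\leq\pfe{\Mvm}=0$, and for reducible $\Mvm$ the sharper bound of Theorem~\ref{Theorem:Main} reads $\df{\pfe{\Grm+m\Mvm}}{m}\leq\pfe{\Mvm_\kappa}\leq\pfe{\Mvm}=0$. Because the Corollary's inequality $\df{\pfe{\Grm+m\Mvm}}{m}\leq 0$ is literally the inequality \eqref{eq:Main} with $\pfe{\Mvm}=0$ substituted, its equality case is identical to the equality case of Theorem~\ref{Theorem:Main}, so nothing further needs to be established there. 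The only real content beyond citing Theorem~\ref{Theorem:Main} is the identity $\pfe{\Mvm}=0$, and the one point that requires a little care is handling it without assuming $\Mvm$ irreducible --- which the row-sum shift argument (or the stochasticity of $e^{\Mvm^\tp t}$) does cleanly.
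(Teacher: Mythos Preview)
Your proposal is correct and follows essentially the same approach as the paper: interpret measure-preservation as $\ev^\tp\Mvm=\0$, deduce $\pfe{\Mvm}=0$, and substitute into \eqref{eq:Main}. The paper's proof is a single sentence that simply asserts $\pfe{\Mvm}=0$ from $\ev^\tp\Mvm=\0$, whereas you supply the (straightforward) justification via the shift $\Mvm^\tp+c\,\I$; this extra care is harmless but not needed at the level of detail the paper works at.
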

\begin{proof}
Measure-preserving mixing means $\ev^\tp \Mvm = \0$, in which case $ \pfe{\Mvm} = 0$, and this is applied to \eqref{eq:Main}.
\end{proof}

%%%%%%%%%%%%%%%
\begin{Corollary} [Lossy Mixing] 
\label{Corollary:Lossy}
If ML-matrix $\A$ is such that quantity is lost in mixing,  then
\aligno{
\df{ \pfe{ \Grm + m \Mvm }}{m} < 0,
}
for any real diagonal matrix $\Grm$.
\end {Corollary}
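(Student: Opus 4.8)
The plan is to combine the bound of Theorem~\ref{Theorem:Main} with the observation that lossy mixing, i.e.\ $\ev^\tp \Mvm \leq \neq \0$ as in \eqref{eq:Loss}, forces $\pfe{\Mvm}$ itself to be strictly negative, so that whichever side of \eqref{eq:Main} is decisive, strict negativity of the derivative results. The first step I would carry out is the key lemma: \emph{if $\Mvm$ is an irreducible ML-matrix with $\ev^\tp\Mvm \leq \neq \0$, then $\pfe{\Mvm} < 0$.} To prove it, choose $c>0$ large enough that $\Mvm + c\I \geq \0$; this matrix is non-negative and irreducible, and its column sums satisfy $\ev^\tp(\Mvm + c\I) = \ev^\tp\Mvm + c\,\ev^\tp \leq c\,\ev^\tp$, with strict inequality in at least one coordinate. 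Since the Perron root of a non-negative matrix is bounded above by its largest column sum, and, for an irreducible matrix, attains that bound only when all column sums coincide, the strict deficiency in one column forces $\pfe{\Mvm + c\I} < c$; hence $\pfe{\Mvm} = \pfe{\Mvm + c\I} - c < 0$.

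Next I would invoke Theorem~\ref{Theorem:Main} and split on whether $\Grm$ is scalar. If $\Grm \neq c\,\I$ for every $c\in\Reals$, the equality clause of Theorem~\ref{Theorem:Main} fails, so $\df{\pfe{\Grm + m\Mvm}}{m} < \pfe{\Mvm} < 0$ by the lemma, and we are done. If instead $\Grm = c\,\I$, then the inequality \eqref{eq:Main} is an equality and contributes nothing; but in this case one computes directly that $\pfe{\Grm + m\Mvm} = c + \pfe{m\Mvm} = c + m\,\pfe{\Mvm}$, using that the spectral abscissa is translation-equivariant under scalar shifts and positively homogeneous, whence $\df{\pfe{\Grm + m\Mvm}}{m} = \pfe{\Mvm} < 0$, again by the lemma. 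Either way the derivative is strictly negative, which is the claim.

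The main obstacle is precisely the degenerate case $\Grm = c\,\I$: there the sharp inequality of Theorem~\ref{Theorem:Main} collapses to equality, so strictness cannot be extracted from \eqref{eq:Main} and must come entirely from $\pfe{\Mvm}<0$ — it is the column-sum argument in the lemma that does the essential work. A secondary point is reducibility of $\Mvm$: the argument extends to the reducible case once one knows $\pfe{\Mvm_\kappa}<0$ for the block $\kappa$ attaining the maximal abscissa, which holds because in the Frobenius normal form \eqref{eq:FrobeniusNormalForm} the column sums of a diagonal block are dominated by the corresponding column sums of $\Mvm$ (the extra entries lying off the diagonal of $\Mvm$, hence non-negative), so strict deficiency propagates to the block — provided "quantity is lost in mixing" is read as $\ev^\tp\Mvm < \0$; under only the weaker $\ev^\tp\Mvm \leq\neq\0$, the block hit by the maximal abscissa need not be the lossy one, and one should restrict the statement to irreducible $\Mvm$.
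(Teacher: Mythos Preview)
Your argument is correct and follows the same line as the paper: show $\pfe{\Mvm}<0$ from the lossy-mixing hypothesis, then apply Theorem~\ref{Theorem:Main}. The paper simply cites Seneta's subinvariance result for the first step, whereas you reprove it via the column-sum bound after a scalar shift; the content is identical.

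Your case split on whether $\Grm$ is scalar is unnecessary. Inequality~\eqref{eq:Main} already says $\df{\pfe{\Grm+m\Mvm}}{m}\leq\pfe{\Mvm}$, and once $\pfe{\Mvm}<0$ this is a strict negative upper bound regardless of whether \eqref{eq:Main} itself holds with equality. The direct computation for $\Grm=c\,\I$ is correct but redundant, and the paper omits it.

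Your caveat on reducibility goes beyond the paper and is a genuine sharpening. The subinvariance conclusion $\pfe{\Mvm}<0$ from $\ev^\tp\Mvm\leq\neq\0$ needs irreducibility (your example $\Mvm=\mathrm{diag}(-1,0)$ has $\pfe{\Mvm}=0$ and the derivative of $\pfe{\Grm+m\Mvm}$ can vanish). The paper's proof implicitly assumes irreducibility through its citation; your remark that one should either read the corollary under that standing hypothesis or strengthen the premise to $\ev^\tp\Mvm<\0$ is correct.
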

\begin{proof}
Lossy mixing means $\ev^\tp \Mvm \leq \neq \0$, which implies by subinvariance \citep[Corollary 3, p. 52]{Seneta:1981}  that $ \pfe{\Mvm} < 0$, and this is applied to \eqref{eq:Main}.  
\end{proof}

%%%%%%%%%%%%%
\begin{Lemma}[Bounds]\label{Lemma:Bounds}
Let $\pfe{\Mvm} = 0$. Then:
\[
 \pfe{\Grm + m \Mvm} \in [ \min_i (\Gr_i), \max_i (\Gr_i)]
\]
\end{Lemma}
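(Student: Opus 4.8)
The plan is to derive both bounds from the monotonicity of the Perron root of ML-matrices under the entrywise partial order, together with the shift identity $\pfe{\B + c\,\I} = \pfe{\B} + c$ and the positive homogeneity $\pfe{m\Mvm} = m\,\pfe{\Mvm}$ valid for $m \geq 0$ (the standing assumption of the paper). First I would record the monotonicity fact in the form needed: if $\B$ and $\C$ are ML-matrices with $\B \leq \C$ entrywise, then $\pfe{\B} \leq \pfe{\C}$. This reduces to Perron--Frobenius monotonicity of the spectral radius of nonnegative matrices: pick $c$ large enough that $\0 \leq \B + c\,\I \leq \C + c\,\I$, so that $\rho(\B + c\,\I) \leq \rho(\C + c\,\I)$; since a nonnegative matrix has spectral radius equal to its spectral abscissa, this reads $\pfe{\B} + c \leq \pfe{\C} + c$. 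Note that this argument uses no irreducibility hypothesis, so the reducible case requires no separate handling.

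For the upper bound I would set $d_{\max} = \max_i \Gr_i$, so that $\Grm \leq d_{\max}\,\I$ as diagonal matrices. Since $m \geq 0$, both $\Grm + m\Mvm$ and $d_{\max}\,\I + m\Mvm$ are ML-matrices (their off-diagonal entries are $m\,\Mvm_{ij} \geq 0$), they have identical off-diagonal entries, and $\Grm + m\Mvm \leq d_{\max}\,\I + m\Mvm$ entrywise. Monotonicity together with the shift and homogeneity identities then gives $\pfe{\Grm + m\Mvm} \leq \pfe{d_{\max}\,\I + m\Mvm} = d_{\max} + m\,\pfe{\Mvm} = d_{\max}$, using $\pfe{\Mvm} = 0$. (An alternative route to the upper bound: with $\pfe{\Mvm} = 0$, inequality \eqref{eq:Main} shows $m \mapsto \pfe{\Grm + m\Mvm}$ has nonpositive derivative wherever differentiable, hence -- being continuous -- is nonincreasing, so it is bounded above by its value $\pfe{\Grm} = \max_i \Gr_i$ at $m=0$.)

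The lower bound is the mirror image: with $d_{\min} = \min_i \Gr_i$ one has $\Grm \geq d_{\min}\,\I$, hence $\Grm + m\Mvm \geq d_{\min}\,\I + m\Mvm$ entrywise, and monotonicity gives $\pfe{\Grm + m\Mvm} \geq \pfe{d_{\min}\,\I + m\Mvm} = d_{\min} + m\,\pfe{\Mvm} = d_{\min}$. Combining the two estimates yields $\pfe{\Grm + m\Mvm} \in [\min_i \Gr_i, \max_i \Gr_i]$.

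I do not expect a serious obstacle. The one point to state carefully is the monotonicity of the spectral abscissa under entrywise domination of ML-matrices; reducing it to the classical monotonicity of the spectral radius of nonnegative matrices via the diagonal shift $+c\,\I$ settles it cleanly, and, crucially, that reduction is insensitive to reducibility, so no block decomposition is needed here. The remaining ingredients -- $\pfe{\B + c\,\I} = \pfe{\B} + c$ because adding $c\,\I$ shifts every eigenvalue by $c$, and $\pfe{m\Mvm} = m\,\pfe{\Mvm}$ because the eigenvalues of $m\Mvm$ are $m$ times those of $\Mvm$ for $m \geq 0$ -- are routine.
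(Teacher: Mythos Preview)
Your proof is correct and follows essentially the same route as the paper: sandwich $\Grm$ between $d_{\min}\I$ and $d_{\max}\I$, invoke monotonicity of the spectral abscissa in the entries of an ML-matrix, and use the shift identity together with $\pfe{\Mvm}=0$. The only cosmetic difference is the justification of monotonicity --- the paper cites \citet{Cohen:1978} for $\partial\pfe{\A}/\partial A_{ij}\geq 0$, while you reduce to the classical Perron--Frobenius monotonicity via the $+c\,\I$ shift; both arguments land on the same inequality chain.
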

%%%%%%%%
\begin{proof}
We know from \citet{Cohen:1978} that $\df{\pfe{\A} }{A_{ij}} \geq 0$ for any ML-matrix $\A$.  So here,
$\df{}{\Gr_i} \pfe{\Grm + m \Mvm} \geq 0$.  Hence
\aligno{
\pfe{\max_i \Gr_i \I + m \Mvm} &= \max_i \Gr_i + m \pfe{\Mvm} = \max_i \Gr_i \\
& \geq  \pfe{\Grm + m \Mvm} \geq  \pfe{\min_i \Gr_i \I + m \Mvm}\\
& \qquad = \min_i \Gr_i.  \qquad \qedhere
}
\end{proof}

%%%%%%%%%%%%%
\begin{Theorem}[Limit]\label{Theorem:Limit}
For any choice of $\lambda \in [\max_i (\Gr_i), \min_i (\Gr_i)]$, there exists a family of ML-matrices, $\Fc \subset \{\Mvm \suchthat \pfe{\Mvm} = 0\}$, that yields
\[
\lim_{m \goesto \infty} \pfe{\Grm + m \ \Mvm} = \lambda \mbox{\ for } \Mvm \in \Fc.
\]
One such family is:
\[
\Fc = \{  \alphav \ev^\tp - \I \suchthat \sum_{i=1}^n \alpha_i \Gr_i = \lambda, \ \alpha_i \geq 0, \mbox{ and } \ev^\tp \alphav= 1 \}.
\]
\end{Theorem}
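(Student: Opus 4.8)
The plan is to take the explicit family $\Fc=\{\alphav\ev^\tp-\I : \sum_i\alpha_i\Gr_i=\lambda,\ \alpha_i\geq 0,\ \ev^\tp\alphav=1\}$ offered in the statement, check that it really consists of ML-matrices with zero spectral abscissa (and is nonempty), and then evaluate the limit directly from the Perron eigenvector of $\Grm+m\Mvm$, using the rank-one structure of $\alphav\ev^\tp$. Fix $\lambda$ in the closed interval spanned by $\min_i\Gr_i$ and $\max_i\Gr_i$. The set $\{\sum_{i=1}^n\alpha_i\Gr_i : \alphav\in\Pc_n\}$ is the convex hull of $\Gr_1,\dots,\Gr_n$ in $\Reals$, i.e.\ exactly that interval, so some probability vector $\alphav$ attains $\sum_i\alpha_i\Gr_i=\lambda$ and $\Fc$ is nonempty. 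For $\Mvm=\alphav\ev^\tp-\I\in\Fc$: the $(i,j)$ off-diagonal entry is $\alpha_i\geq0$, so $\Mvm$ is an ML-matrix; and since $(\alphav\ev^\tp)\alphav=\alphav(\ev^\tp\alphav)=\alphav$, the rank-one matrix $\alphav\ev^\tp$ has eigenvalues $1$ and $0$ (multiplicity $n-1$), hence $\Mvm$ has eigenvalues $0$ and $-1$ and $\pfe{\Mvm}=0$ (equivalently $\ev^\tp\Mvm=\0$, so Corollary~\ref{Corollary:Preserving} applies). In particular Lemma~\ref{Lemma:Bounds} gives $\pfer(m):=\pfe{\Grm+m\Mvm}\in[\min_i\Gr_i,\max_i\Gr_i]$ for every $m\geq0$.

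Next I derive a secular relation. Write $\Grm+m\Mvm=(\Grm-m\I)+m\,\alphav\ev^\tp$, a rank-one update of the diagonal matrix $\diag{\Gr_i-m}$. Let $\vvf(m)\geq\0$, $\vvf(m)\neq\0$, be a right eigenvector for $\pfer(m)$ (such exists by Perron--Frobenius applied to the nonnegative matrix $\Grm+m\Mvm+c\I$ for large $c$), and set $s:=\ev^\tp\vvf(m)>0$. Take $m$ large enough that $m>\max_i\Gr_i-\min_i\Gr_i$, so that $\pfer(m)+m-\Gr_i\geq\min_j\Gr_j+m-\max_j\Gr_j>0$ for every $i$. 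The $i$-th component of $(\Grm+m\Mvm)\vvf(m)=\pfer(m)\vvf(m)$ reads $(\Gr_i-m)v_i(m)+m\alpha_i s=\pfer(m)v_i(m)$, hence $v_i(m)=m\alpha_i s/(\pfer(m)+m-\Gr_i)$ (consistently $v_i(m)=0$ when $\alpha_i=0$). Summing over $i$ and dividing by $s>0$:
\[
1 \;=\; m\sum_{i=1}^n\frac{\alpha_i}{\pfer(m)+m-\Gr_i}.
\]

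Now I pass to the limit. Since $m/(\pfer(m)+m-\Gr_i)=1-(\pfer(m)-\Gr_i)/(\pfer(m)+m-\Gr_i)$ and $\sum_i\alpha_i=1$, the secular relation rearranges to
\[
\sum_{i=1}^n\alpha_i\,\frac{m\,(\pfer(m)-\Gr_i)}{\pfer(m)+m-\Gr_i} \;=\; 0 .
\]
By Lemma~\ref{Lemma:Bounds} the function $m\mapsto\pfer(m)$ is bounded, so $m/(\pfer(m)+m-\Gr_i)\goesto 1$ as $m\goesto\infty$, uniformly in $i$. Given any subsequence along which $\pfer(m_k)\goesto\pfer^\star$, passing to the limit in the displayed identity yields $\sum_i\alpha_i(\pfer^\star-\Gr_i)=0$, i.e.\ $\pfer^\star=\sum_i\alpha_i\Gr_i=\lambda$. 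As $\{\pfer(m)\}$ is bounded with every subsequential limit equal to $\lambda$, we conclude $\lim_{m\goesto\infty}\pfe{\Grm+m\Mvm}=\lambda$, proving the theorem.

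The one point requiring care is licensing the division by $\pfer(m)+m-\Gr_i$ — equivalently, ruling out that the spectral abscissa of $\Grm+m\Mvm$ tracks one of the runaway diagonal values $\Gr_i-m$ for arbitrarily large $m$, which is exactly the worry when some $\alpha_i=0$ and $\Mvm$ is reducible (those coordinates are "source" blocks contributing eigenvalues $\Gr_i-m$). Lemma~\ref{Lemma:Bounds} resolves this cleanly: it pins $\pfer(m)$ inside the fixed interval $[\min_i\Gr_i,\max_i\Gr_i]$ while every $\Gr_i-m\goesto-\infty$, so for all large $m$ the Perron root is bounded away from each $\Gr_i-m$ and the eigenvector computation above is valid regardless of reducibility; everything else is elementary manipulation.
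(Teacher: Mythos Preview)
Your proof is correct, and the core idea coincides with the paper's: both exploit the rank-one structure of $\alphav\ev^\tp$ to solve the eigenvector equation componentwise as $v_i(m)=m\alpha_i s/(\pfer(m)+m-\Gr_i)$, and both invoke Lemma~\ref{Lemma:Bounds} to keep $\pfer(m)$ bounded while $m\to\infty$. The paper, however, splits into cases. For interior $\lambda$ it takes $\alphav>\0$ (so $\Mvm$ is irreducible), normalizes $\ev^\tp\vvf(m)=1$, observes from summing the eigenequation that $\pfer(m)=\ev^\tp\Grm\vvf(m)$, and then passes to the limit $\vvf(m)\to\alphav$ directly. For the endpoints $\lambda=\max_i\Gr_i$ or $\min_i\Gr_i$ it takes $\alphav=\ev_{i_{\max}}$ or $\ev_{i_{\min}}$ and reads off the eigenvalues of the resulting reducible matrix from its Frobenius normal form. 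Your secular-equation rearrangement together with the subsequential-limit argument handles every $\alphav\in\Pc_n$ --- including those with arbitrary zero patterns --- in a single stroke, which is a modest but genuine streamlining; indeed the paper's proof, as written, does not explicitly cover $\alphav$ with some but not all components zero.
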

%%%%
\begin{proof}
First, let $\alphav > \0$, which makes matrices $\alphav \ev^\tp$ and $\Am(m) \eqdef \Grm + m \ \alphav \ev^\tp$ irreducible, giving $\Am(m)$ a unique positive right Perron vector, $\vvf(m)$.  Hence
%%%%
\alignit{
\label{eq:HoC}
\pfe{\Am(m)} \ \vvf(m) &= \Grm\vvf(m) + m \, \alphav \, \ev^\tp \vvf(m)- m \, \vvf(m) \notag \\
& =   \Grm \vvf(m) + m (\alphav - \vvf(m)) \\
& \iff \notag \\
v_i(m) = &\frac{m \, \alpha_i}{ \pfe{\Am(m)} - \Gr_i + m} =  \frac{\alpha_i}{\dspfrac{\pfe{\Am(m)} - \Grm}{m} + 1} \notag
}
By Lemma \ref{Lemma:Bounds}, $\pfe{\Am(m)}$ is bounded, hence 
\aligno{
\lim_{m \goesto \infty} v_i(m) = \lim_{m \goesto \infty} \frac{\alpha_i}{\dspfrac{\pfe{\Am(m)} - \Grm}{m} + 1} = \alpha_i.
}
When the elements in \eqref{eq:HoC} are summed:
\aligno{
\pfe{\Am(m)} \ \ev^\tp \vvf(m) = \pfe{\Am(m)}  &=  \ev^\tp  \Grm \vvf(m) + m \, \ev^\tp (\alphav - \vvf(m))\\
& =  \ev^\tp  \Grm \vvf(m) + m (1 - 1) \\
&  =  \ev^\tp  \Grm \vvf(m).
}
Hence, 
$$
\lim_{m \goesto \infty} \pfe{\Am(m)}  
= \lim_{m \goesto \infty} \ev^\tp  \Grm \vvf(m)
=  \ev^\tp  \Grm \alphav = \sum_{i=1}^n \Gr_i \alpha_i.
$$

The assumption that $\alphav > \0$ allows some solution to $\sum_{i=1}^n \Gr_i \alpha_i = \lambda$ for any $\lambda \in ( \max_i (\Gr_i), \min_i (\Gr_i) )$.  But for $\lambda = \Gr_{i_{\max}}$ or $\lambda = \Gr_{i_{\min}}$, where $i_{\max}$ and $i_{\min}$ are the indices producing $\max_i (\Gr_i)$ and $ \min_i (\Gr_i)$, then $\alphav = \ev_{i_{\max}}$ or $\alphav = \ev_{i_{\min}}$, respectively.  In these cases, $\Grm + m \ \alphav \ev^\tp$ is no longer irreducible.  The Frobenius normal form for $\Am(m) = \Grm + m( \ev_i \ev^\tp - \I)$ is
\aligno{
\bar{\Am}(m) = \matrx{
\begin{array}{cccccc}
\Gr_1-m &  & & &\BigZero \\
& \Gr_2-m &  && \\
&   & \ddots & &\\
\BigZero&  &  &  \Gr_j-m & &\\
& && & \ddots &\\
m & m & \cdots &m & \cdots & \Gr_i
\end{array}
}_.
}
Hence, the eigenvalues of $ \Grm + m( \ev_i \ev^\tp - \I)$ are the diagonal elements $\{\Gr_j - m \suchthat j \neq i \} \union \{\Gr_i\}$.  Thus when $m > \max_j ( \Gr_j -  \Gr_i)$ then $\pfe{\Am(m)} = \Gr_i > \Gr_j - m$ for all $j \neq i$.  So,
\aligno{
\lim_{m \goesto \infty} \pfe{\Grm + m( \ev_i \ev^\tp - \I) } =  \Gr_i.
}
Letting $i = i_{\max}$ and $i = i_{\min}$ completes the construction.
\end{proof}

%%%%%%%%%%%%%
\begin{Corollary}[Stability] \label{Corollary:Stability}
Let $\Am(m) = \Grm + m \ \Mvm$ as in Theorem \ref{Theorem:Main}.  Suppose that $\pfe{\Mvm} \leq 0$.  If the solution $\x(t) = 0$ is unstable under \eqref{eq:DiffEqSolution} for some $m^*$, then it is unstable for all $0 \leq m \leq m^*$.
\end{Corollary}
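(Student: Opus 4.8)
The plan is to reduce Corollary~\ref{Corollary:Stability} to a single fact: $m \mapsto \pfe{\Am(m)}$ is non-increasing on $[0,\infty)$ whenever $\pfe{\Mvm}\le 0$. Granting that, the corollary is immediate, because instability of the zero solution of \eqref{eq:DiffEqSolution} at a mixing rate $m$ corresponds to the spectral abscissa being positive, $\pfe{\Am(m)}>0$ (for then $\norm{e^{\Am(m)t}\x(0)}$ grows exponentially for generic $\x(0)$); so if $\pfe{\Am(m^*)}>0$ then for every $0\le m\le m^*$ we get $\pfe{\Am(m)} \ge \pfe{\Am(m^*)} > 0$, i.e. instability persists.

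To establish the monotonicity I would first handle the irreducible case. There $\pfe{\Am(m)}$ is a simple eigenvalue, hence real-analytic in $m$ on the range of $m>0$ where $\Am(m)=\Grm+m\Mvm$ stays irreducible (which is all of $m>0$, since the off-diagonal sign pattern of $\Am(m)$ equals that of $\Mvm$ for $m>0$), and Theorem~\ref{Theorem:Main} gives $\df{}{m}\pfe{\Am(m)} \le \pfe{\Mvm} \le 0$ throughout. Since the spectral abscissa is a continuous function of the matrix entries, $\pfe{\Am(m)}$ extends continuously to $m=0$, so a function with non-positive derivative on $(0,\infty)$ and continuous on $[0,\infty)$ is non-increasing on $[0,\infty)$.

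For reducible $\Mvm$ I would use the Frobenius normal form \eqref{eq:FrobeniusNormalForm}: $\pfe{\Am(m)} = \max_h \pfe{\Am_h(m)}$ over the irreducible diagonal blocks, and the irreducible case applied to each block $h$ shows $\pfe{\Am_h(m)}$ is non-increasing, using $\pfe{\Mvm_h} \le \max_h \pfe{\Mvm_h} = \pfe{\Mvm} \le 0$. A pointwise maximum of finitely many non-increasing (indeed piecewise-analytic, locally Lipschitz) functions is again non-increasing and continuous, so the claim holds in general; equivalently, one may integrate the a.e.-defined derivative, $\pfe{\Am(m_2)} - \pfe{\Am(m_1)} = \int_{m_1}^{m_2}\df{}{m}\pfe{\Am(m)}\,\mbox{d}m \le 0$ for $m_1 \le m_2$, which is legitimate because the spectral abscissa is locally Lipschitz in $m$ here.

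The only delicate point — the main obstacle — is precisely this passage from the infinitesimal bound of Theorem~\ref{Theorem:Main} to global monotonicity across the finitely many values of $m$ at which the Perron root can fail to be differentiable (where two diagonal blocks tie for the maximal spectral abscissa, or where a block family crosses out of irreducibility at $m=0$). Disposing of it via continuity of the spectral abscissa together with the ``max of finitely many analytic, non-increasing functions stays non-increasing'' observation is what makes the argument rigorous; the rest is bookkeeping.
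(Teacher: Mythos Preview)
Your proposal is correct and follows essentially the same approach as the paper: translate instability into $\pfe{\Am(m)}>0$, invoke Theorem~\ref{Theorem:Main} to bound $\df{}{m}\pfe{\Am(m)}\le\pfe{\Mvm}\le 0$, and conclude that $\pfe{\Am(m)}\ge\pfe{\Am(m^*)}>0$ for $0\le m\le m^*$. Your treatment is in fact more careful than the paper's two-line proof, which asserts monotonicity directly from the derivative bound without separately addressing the reducible case or the differentiability points you flag.
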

\begin{proof}
Instability means that $\pfe{\Am(m)} > 0$.  By Theorem \ref{Theorem:Main}, when $\pfe{\Mvm} \leq 0$,
$$
\df{ \pfe{\Am(m)}}{m} \leq  \pfe{\Mvm} \leq 0.
$$
 Hence, if $\pfe{\Am(m^*)} > 0$, then $\pfe{\Am(m)} \geq \pfe{\Am(m^*)} > 0$ for all $0 \leq m \leq m^*$.
\end{proof}

\subsection{Additional Results}
%%%%%%%%%%%%%%%
\begin{Corollary}[Heterogeneity]\label {Corollary:Heterogeneity}
Under low enough mixing, heterogeneity of growth rates always produces greater asymptotic growth than the average of the growth rates:
\aligno{
\exists m^* > 0 \suchthat  \ \forall \ 0 \leq m < m^*: \spab(\Grm + m \Mvm) > \frac{1}{n} \sum_{i=1}^n \Gr_i.
}
\end{Corollary}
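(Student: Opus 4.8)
The idea is that the claimed strict inequality already holds at $m=0$ and then persists on a neighbourhood by continuity.  First I would note that ``heterogeneity of growth rates'' is precisely the hypothesis $\Grm \neq c\,\I$, i.e.\ the $\Gr_i$ are not all equal --- the same exceptional case that appears in Theorem~\ref{Theorem:Main}.  At $m=0$ the matrix $\Grm$ is diagonal, so its eigenvalues are $\Gr_1,\dots,\Gr_n$ and hence $\pfe{\Grm}=\max_i \Gr_i$.  Since the $\Gr_i$ are not all equal, the maximum strictly exceeds the arithmetic mean, $\max_i \Gr_i > \tfrac1n\sum_{i=1}^n \Gr_i$ (equality of the maximum with the mean would force every $\Gr_i$ to equal the mean).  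So $\pfe{\Grm + m\Mvm} > \tfrac1n\sum_i \Gr_i$ at $m=0$.

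Next I would invoke continuity of the spectral abscissa.  The eigenvalues of $\Grm + m\Mvm$ are the roots of a characteristic polynomial whose coefficients are polynomials in $m$, so they vary continuously with $m$; consequently the largest real part among them, $m\mapsto\pfe{\Grm+m\Mvm}$, is a continuous function of $m\ge 0$, and $\pfe{\Grm + m\Mvm}\to \pfe{\Grm}=\max_i\Gr_i$ as $m\to 0^+$.  Combining this with the strict inequality at $m=0$, there is an $m^*>0$ with $\pfe{\Grm+m\Mvm} > \tfrac1n\sum_i\Gr_i$ for all $0\le m<m^*$; concretely one may take $m^* = \sup\{\,m\ge 0 : \pfe{\Grm + s\Mvm} > \tfrac1n\sum_i\Gr_i\ \text{for all}\ s\in[0,m]\,\}$, which is positive precisely because of the continuity just established.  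This is the assertion.

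The step needing a little care --- and the only place one might stumble --- is that $\Grm$ is reducible, so the Perron-vector derivative formula used in the proof of Theorem~\ref{Theorem:Main}, and the variational characterisation \eqref{eq:Friedland}, are not available at the boundary point $m=0$.  This is why I would deliberately argue through continuity of the spectral abscissa rather than through a derivative estimate, and note that no irreducibility of $\Mvm$ is needed.  (If in addition $\pfe{\Mvm}\le 0$, then by Theorem~\ref{Theorem:Main} the function $\pfe{\Grm+m\Mvm}$ is non-increasing in $m$, so $m^*$ is simply the first crossing of the level $\tfrac1n\sum_i\Gr_i$, possibly $+\infty$; but this refinement is not needed for the statement as given.)
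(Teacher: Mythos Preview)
Your proof is correct and follows essentially the same approach as the paper: establish the strict inequality $\pfe{\Grm}=\max_i \Gr_i > \tfrac1n\sum_i \Gr_i$ at $m=0$ from the heterogeneity hypothesis $\Grm\neq c\,\I$, then invoke continuity of the eigenvalues in $m$ to extend to a neighbourhood $[0,m^*)$. Your write-up is more detailed (and your remarks about avoiding the derivative formula at the reducible boundary point $m=0$ are well taken), but the underlying argument is identical to the paper's two-sentence proof.
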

\begin{proof}
When $\Grm \neq c \ \I$  for any $c \in \Reals$, then $\pfe{\Grm} = \max_i( \Gr_i) >  \frac{1}{n} \sum_{i=1}^n \Gr_i$.  Since the eigenvalues are continuous functions of the entries of $\Grm + m \ \Mvm$, there is some neighborhood  $(0, m^*)$ where $\spab(\Grm + m \Mvm) > \frac{1}{n} \sum_{i=1}^n \Gr_i$ for $m \in (0, m^*)$. 
\end{proof}

%%%%%%%%%%%%%%%
\begin{Theorem} [Convexity] 
\label{Theorem:Convexity}
Let $\Grm$ be a real diagonal matrix, and $\Mvm$ be an irreducible ML-matrix.  Then $\pfe{\Grm + m\ \Mvm}$ is convex in $m$. Specifically:

For any $m_1, m_2 \geq 0, m_1 \neq m_2$ and $0 < \alpha < 1$, 
one has:
\alignit{
\label{eq:mConvexity}
& \pfe{ (1-\alpha) \ (\Grm+ m_1 \Mvm ) + \alpha \ (\Grm+ m_2 \Mvm )} \notag \\
& \leq  (1-\alpha) \ \pfe{\Grm+ m_1 \Mvm} + \alpha \ \pfe{\Grm+ m_2 \Mvm },
}
with equality if and only if $\Grm = c \ \I$, for some $c \in \Reals$.
\end{Theorem}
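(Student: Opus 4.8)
The plan is to read \eqref{eq:mConvexity} as a statement about a one-variable function. Since $(1-\alpha)\,\Am(m_1) + \alpha\,\Am(m_2) = \Am\bigl((1-\alpha)m_1 + \alpha m_2\bigr)$, the inequality says exactly that $g(m) \eqdef \pfe{\Am(m)}$ is convex on $[0,\infty)$. Because $\Mvm$ is irreducible, $\Am(m) = \Grm + m\Mvm$ is an irreducible ML-matrix for every $m>0$, so its Perron root $g(m)$ is a simple eigenvalue and real-analytic on $(0,\infty)$; I would prove convexity by showing $g''(m)\geq 0$ there, which then extends convexity (and, when $\Grm\neq c\,\I$, strict convexity of the chord inequality) to the endpoint $m=0$ by continuity of the spectral abscissa, where $\Am(0)=\Grm$ may be reducible. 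From the computation already made in the proof of Theorem \ref{Theorem:Main}, $g'(m) = \uv(m)^\tp \Mvm\,\vv(m)$, and the first step is to differentiate once more using the standard second-order perturbation formula for a simple eigenvalue (the one built from the reduced resolvent of $\Am(m) - g(m)\,\I$ at its Perron root), and then to bring the result into a manifestly signed form.

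The massaging uses the special structure $\Am(m)=\Grm+m\Mvm$. From $\Am(m)\vv(m) = g(m)\vv(m)$ one gets $\Mvm\,\vv(m) = \tfrac1m\bigl(g(m)\I-\Grm\bigr)\vv(m)$ and, from the left relation, $\uv(m)^\tp\Mvm = \tfrac1m\,\uv(m)^\tp\bigl(g(m)\I-\Grm\bigr)$; so with $c_i \eqdef g(m)-\Gr_i-m\,g'(m)$ the identity $\uv(m)^\tp\Mvm\,\vv(m)=g'(m)$ reads $\sum_i u_i(m)v_i(m)\,c_i = 0$. Introduce the diagonal similarity
\aligno{
\tilde\N \eqdef \diag{\vv(m)}^{-1}\bigl(\Am(m)-g(m)\,\I\bigr)\diag{\vv(m)}.
}
Since $\bigl(\Am(m)-g(m)\I\bigr)\vv(m)=\0$, the matrix $\tilde\N$ has zero row sums; it has non-negative off-diagonal entries and is irreducible, hence it is the generator of an irreducible continuous-time Markov chain, with stationary distribution $\pi_i = u_i(m)v_i(m)$ (because $\uv(m)^\tp\bigl(\Am(m)-g(m)\I\bigr)=\0^\tp$ and $\uv(m)^\tp\vv(m)=1$). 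Carrying the similarity and these substitutions through the second-order formula, $g''(m)$ collapses to a $\pi$-weighted integrated autocorrelation,
\aligno{
g''(m) = \tfrac{2}{m^2}\int_0^\infty \langle \cv,\, e^{\tilde\N t}\,\cv\rangle_\pi \,\dt = \tfrac{2}{m^2}\,\langle \yv,\, -\tilde\N\,\yv\rangle_\pi,
}
where $\langle \av,\bv\rangle_\pi \eqdef \sum_i \pi_i a_i b_i$ and $\yv$ is the unique solution of $-\tilde\N\,\yv = \cv$ with $\sum_i\pi_i y_i = 0$; the integral converges and $\yv$ exists precisely because $\sum_i\pi_i c_i = 0$ puts $\cv$ orthogonal to the constant vector, on whose complement $\tilde\N$ is invertible.

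The conclusion is then the elementary positivity of a Markov Dirichlet form. Using only global balance $\pi^\tp\tilde\N = \0^\tp$ (equivalently $\sum_{j\neq i}\pi_i\tilde N_{ij} = \sum_{j\neq i}\pi_j\tilde N_{ji}$ for each $i$), a one-line rearrangement gives
\aligno{
\langle \yv,\, -\tilde\N\,\yv\rangle_\pi = \tfrac12\sum_{i\neq j}\pi_i\,\tilde N_{ij}\,(y_i - y_j)^2 \ \geq\ 0,
}
so $g''(m)\geq 0$. If $g''(m_0)=0$ for even one $m_0>0$, irreducibility of $\tilde\N$ forces $y_i=y_j$ whenever $\tilde N_{ij}>0$, so $\yv$ is constant, so $\cv = -\tilde\N\,\yv = \0$, i.e. $\Gr_i = g(m_0)-m_0\,g'(m_0)$ for all $i$, i.e. $\Grm = c\,\I$; conversely if $\Grm = c\,\I$ then $g(m) = c + m\,\pfe{\Mvm}$ is affine and \eqref{eq:mConvexity} holds with equality. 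Otherwise $g''>0$ on $(0,\infty)$, $g$ is strictly convex there, and the chord inequality \eqref{eq:mConvexity} is strict for all $m_1\neq m_2$ in $[0,\infty)$.

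I expect the genuine obstacle to be the passage described in the first two paragraphs: the raw second-order formula $-2\,\uv^\tp\Mvm\,(\text{reduced resolvent})\,\Mvm\,\vv$ carries no evident sign, because the reduced resolvent of a non-symmetric ML-matrix is not itself sign-definite. The two moves that rescue it are (i) absorbing the $\Mvm$ factors into $g(m)\I-\Grm$, which replaces them by the vector $\cv$ that is automatically orthogonal, in the Perron-weighted inner product, to the dominant eigendirection, and (ii) conjugating $\Am(m)-g(m)\I$ to a Markov generator, which turns what remains into the standard fact that $-\tilde\N$ is positive semidefinite in the $\pi$-weighted inner product. A smaller technical point is the endpoint $m=0$, where $\Grm$ is reducible and $g$ need not be differentiable; this is absorbed entirely by continuity, since all the real work lives on $m>0$.
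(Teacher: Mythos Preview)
Your argument is correct, and it takes a genuinely different route from the paper's proof. The paper does not compute $g''(m)$ at all. Instead it invokes \citet{Cohen:1979:MPCBS} and \citet{Friedland:1981}, which establish that $\pfe{\D' + \Mvm}$ is (strictly, when $\Mvm$ is irreducible and $\D'_1-\D'_2\neq c\,\I$) convex in the diagonal matrix $\D'$. It then uses the homogeneity identity $\pfe{\Grm + m\,\Mvm} = m\,\pfe{\tfrac{1}{m}\Grm + \Mvm}$ to rewrite the chord inequality \eqref{eq:mConvexity}, after dividing through by $(1-\alpha)m_1+\alpha m_2$, as an instance of convexity in $\D'$ with $\D'_1=\tfrac{1}{m_1}\Grm$, $\D'_2=\tfrac{1}{m_2}\Grm$, and a suitably transformed weight $\beta$. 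The equality condition $\D'_1-\D'_2=c\,\I$ becomes $(1/m_1-1/m_2)\Grm=c\,\I$, i.e.\ $\Grm=c'\,\I$.

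What each approach buys: the paper's argument is short and leans entirely on cited convexity results, with the substitution doing all the work; it treats the Perron root as a black box. Your approach is self-contained and more informative: it produces the explicit formula $g''(m)=\tfrac{2}{m^2}\cdot\tfrac12\sum_{i\neq j}\pi_i\tilde N_{ij}(y_i-y_j)^2$, which in principle gives quantitative lower bounds on the curvature and makes the equality case transparent (irreducibility forces $\y$ constant, hence $\cv=\0$, hence $\Grm=c\,\I$). The cost is the perturbation-theory bookkeeping and the two reductions you flagged---replacing $\Mvm\vv$ and $\uv^\tp\Mvm$ by diagonal actions via the eigenvector equations, and conjugating to the Markov generator $\tilde\N$---but both are sound as you have them. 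Your handling of the endpoint $m=0$ by continuity is also adequate, and matches what the paper's argument implicitly needs as well (since the homogeneity identity requires $m>0$).
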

%%%%%%%%
\begin{proof}
Convexity of $\pfe{\Grm + m\ \Mvm}$ with respect to diagonal matrix $\Grm$ was established by \citet[Theorem 3]{Cohen:1979:MPCBS}.  Specifically, for real diagonal matrices $\D_i$ and ML-matrix $\A$, for $0 < \beta < 1$:
\alignit{
\label{eq:DConvexity}
&\pfe{ (1-\beta) \D_1  +  \beta \D_2  + \A)} \notag \\
& \leq 
(1-\beta) \ \pfe{\D_1 +  \A} +  \beta \ \pfe{\D_2 +  \A}.
}
\citet[Theorem 4.1]{Friedland:1981} showed further that equality in \eqref{eq:DConvexity} obtains if and only if $\D_1 - \D_2 = c \ \I$ for some $c \in \Reals$.  

%%%%%%%%%%
Convexity with respect to the diagonal matrix implies convexity with respect to $m$, which can be seen by morphing  \eqref{eq:mConvexity} into \eqref{eq:DConvexity} through the identity
\[
\pfe{\Grm + m\ \Mvm} = m \ \pfe{\frac{1}{m} \Grm + \Mvm}\suchthat
\]
\eban
\lefteqn{
\pfe{ \Grm  + [ (1-\alpha)  m_1 + \alpha m_2] \Mvm) } } \\
&=& [(1-\alpha)  m_1 + \alpha m_2 ] \, \pfe{ \frac{1}{[ (1-\alpha)  m_1 + \alpha m_2]} \Grm + \Mvm} \\
&\leq&   (1-\alpha) \, \pfe{ \Grm + m_1 \Mvm} + \alpha \, \pfe{\Grm + m_2 \Mvm} \\
&&= (1-\alpha) m_1 \,  \pfe{  \frac{1}{m_1} \Grm + \Mvm} \\
&& \qquad +\  \alpha \, m_2\, \pfe{  \frac{1}{m_2} \Grm +\Mvm }.
\eean
Dividing both sides by $(1-\alpha)  m_1 + \alpha m_2$ gives:
\eba\label{eq:MainTheoremPr1}
\lefteqn{ \pfe{ \frac{1}{[ (1-\alpha)  m_1 + \alpha m_2]} \Grm + \Mvm} } \nonumber \\
&& \leq \frac{(1-\alpha)m_1}{(1-\alpha)  m_1 + \alpha m_2}  \,   \pfe{  \frac{1}{m_1} \Grm + \Mvm}  \nonumber \\
&& +  \frac{\alpha m_2}{(1-\alpha)  m_1 + \alpha m_2} \,  \pfe{  \frac{1}{m_2} \Grm +\Mvm }.
\eea
 Now, define
 \[
\D_1 := \frac{1}{m_1} \Grm, \, \, \D_2 :=  \frac{1}{m_2} \Grm,
 \]
and
\[
 \beta :=  \frac{\alpha m_2}{(1-\alpha)  m_1 + \alpha m_2}, \mbox{\ so \ } 1-\beta =  \frac{(1-\alpha) m_1}{(1-\alpha)  m_1 + \alpha m_2}.
\]
Thus \eqref{eq:MainTheoremPr1} becomes:
\alignit{
\label{eq:MainTheoremPr2}
&\pfe{ (1-\beta) \D_1  +  \beta\D_2  + \Mvm} \notag \\
&\leq 
 (1-\beta) \ \pfe{\D_1 +  \Mvm } +  \beta \ \pfe {\D_2 +  \Mvm },
}
which is \eqref{eq:DConvexity}.  The equality condition, $\D_1 - \D_2 = c \ \I$ for some $c \in \Reals$, becomes $\D_1 - \D_2 = (1/m_1 - 1/m_2) \Grm = c \ \I$.  Since $m_1 \neq m_2$, this is precluded if $\Grm \neq c \ \I$ for any $c \in \Reals$, in which case $\pfe{\Am(m)}$ is strictly convex in $m$.
\end{proof}

%%%%%%%%%%%%%%%%%%%%%%%%%%%%%%%%
\section{Discussion}

The proximate motivation for extending Karlin's Theorem 5.2 from maps to differential equations was a theoretical study by \citet{Steinmeyer:and:Wilke:2009} on the effect of tissue compartments for antiviral, lethal mutagenesis therapy.  The background of the problem is that a number of anti-viral agents seem to work by mutating the virus to inviability.  But different tissues can concentrate the therapeutic mutagen to different concentrations.  The greater the concentration, the lower that the viral replication rate becomes.  Thus the virus will have different growth rates in different compartments.  

\citet{Steinmeyer:and:Wilke:2009} ask how heterogeneity in mutagen concentrations, and movement of virions between compartments, affects the dosage needed to cure the infection.  They obtain analytic results for a two compartment model, but for more than two compartments must resort to high $m$ and low $m$ limits, course grained approximations, and numerical studies, to obtain results.

Their results exhibit a number of salient features.  Here it is noted where each of these features is proven analytically:  
\benu
\item Compartmental heterogeneity of growth rates increases long-term viral growth rates above the average of the compartment growth rates (Corollary \ref{Corollary:Heterogeneity});
\item Greater viral mixing between compartments decreases the long-term total viral growth rate (Theorem \ref{Theorem:Main});
\item The decrease in viral growth rate is convex in the amount of mixing (Theorem \ref{Theorem:Convexity});
\item  The whole body viral growth rate for all levels of mixing is bounded above by the maximum growth rate in any compartment (Lemma \ref{Lemma:Bounds});
\item As the amount mixing of virions between compartments increases, it converges to a weighted average of the compartmental growth rates (Theorem \ref{Theorem:Limit}).
\eenu
As can be seen, each of these qualitative features observed in the two-compartment and numerical results of \citet{Steinmeyer:and:Wilke:2009} are in fact properties that extend to any number of compartments, arbitrary growth rates among compartments, patterns of mixing between compartments, and magnitudes of mixing.

\subsection{Wider Application}

An intimated in Table \ref{Table:List}, the results here may find application in a diverse array of systems.  The model for viral replication in multiple tissue compartments in \linebreak\citet{Steinmeyer:and:Wilke:2009} has the same form as ecological models of sink and source populations \citep{Holt:1996:Oikos,Armsworth:and:Roughgarden:2005:AmNat}. 

In addition to linear  dynamical systems of the form \eqref{eq:DiffEq}, many nonlinear systems may have  \eqref{eq:DiffEq} as the dynamic of small perturbations.  It would be informative  to collect other examples of models of the form \eqref{eq:DiffEq} from the literature in the various fields mentioned in Table \ref{Table:List}.

\subsection{Further Extensions}

One of the important sources of ML-matrices is the numerical solution of second-order partial differential equations.  The second derivative can be discretely approximated by the centered difference method, which in one dimension generates a tri-diagonal ML-matrix \citep[p. 38-4]{Beattie:2007:HLA}, with $1$s along the super- and sub-diagonals, and $-2$ along the diagonal. Thus, a centered difference approximation to 
\alignit{\label{eq:2ndOrder}
\pf{x(s,t)}{t} = g(s) \ x(s,t) + m \ \ppf{x(s,t)}{s},
}
with the proper conditions should be of the form \eqref{eq:DiffEq}, and the results here would apply.  The second derivative in higher dimensions, such as the Laplace operator, also has discrete approximations that are ML-matrices (\citealt[pp. 40-1 -- 40-2]{Ng:2007:HLA}; \citealt[pp. 41-1 -- 41-2]{Greenbaum:2007:HLA}), so one can seek analogous results in higher dimensional diffusions.

Under the analytic assumptions for which the solutions of the centered difference approximation converge to positive eigenfunction solutions of \eqref{eq:2ndOrder}, one can expect that the Perron root will be a decreasing function of $m$.  

Here I have only touched upon some applications and extensions one might find for the results presented.  The fact that Karlin's result --- that mixing reduces growth --- extends from discrete-time/discrete space systems to continuous-time/discrete space systems, and promises to extend further to continuous time and space systems, and to diffusion operators, suggests something fundamental in the phenomenon.

%%%%%%%%%%%%%%%%%%%%%%%%%%%%%%%%%%%%
\section*{Acknowledgements}
I thank Shmuel Friedland for the invitation to contribute to the 16th International Linear Algebra Society Conference in Pisa, which facilitated the present work.

%%%%%%%%%%%%%%%%%%%%%%% 
%\input{/usr/local/LATEX/biblist.tex}


\begin{thebibliography}{21}
\expandafter\ifx\csname natexlab\endcsname\relax\def\natexlab#1{#1}\fi
\expandafter\ifx\csname url\endcsname\relax
  \def\url#1{\texttt{#1}}\fi
\expandafter\ifx\csname urlprefix\endcsname\relax\def\urlprefix{URL }\fi

\bibitem[{Altenberg(1984)}]{Altenberg:1984}
Altenberg, L., 1984.
\newblock A Generalization of Theory on the Evolution of Modifier Genes.
\newblock Ph.D. thesis, Stanford University.
\newblock Searchable online and available from University Microfilms, Ann
  Arbor, MI.

\bibitem[{Altenberg(2009)}]{Altenberg:2009:Linear}
Altenberg, L.
\newblock 2009.
\newblock The evolutionary reduction principle for linear variation in genetic
  transmission.
\newblock Bulletin of Mathematical Biology 71:1264--1284.

\bibitem[{Altenberg and Feldman(1987)}]{Altenberg:and:Feldman:1987}
Altenberg, L. and Feldman, M.~W.
\newblock 1987.
\newblock Selection, generalized transmission, and the evolution of modifier
  genes. {I}. {T}he reduction principle.
\newblock Genetics 117:559--572.

\bibitem[{Armsworth and
  Roughgarden(2005)}]{Armsworth:and:Roughgarden:2005:AmNat}
Armsworth, P.~R. and Roughgarden, J.~E.
\newblock 2005.
\newblock The impact of directed versus random movement on population dynamics
  and biodiversity patterns.
\newblock American Naturalist 165.

\bibitem[{Bapat and Raghavan(1997)}]{Bapat:and:Raghavan:1997}
Bapat, R.~B. and Raghavan, T. E.~S., 1997.
\newblock Nonnegative Matrices and Applications.
\newblock Cambridge University Press, Cambridge, UK.

\bibitem[{Beattie(2007)}]{Beattie:2007:HLA}
Beattie, C., 2007.
\newblock Matrix factorizations and direct solution of linear systems.
\newblock Pages 38--1--38--17 \emph{in} L.~Hogben, ed. Handbook of Linear
  Algebra. Chapman and Hall.

\bibitem[{Bernstein(2009)}]{Bernstein:2009}
Bernstein, D.~S., 2009.
\newblock Matrix Mathematics: Theorem, Facts, and Formulas.
\newblock Princeton University Press, Princeton, 2nd edition edition.
\newblock ISBN 978-0-691-13287-7.

\bibitem[{Caswell(2000)}]{Caswell:2000}
Caswell, H., 2000.
\newblock Matrix Population Models.
\newblock Sinauer Associates, 2nd edition.
\newblock ISBN 978-0878931217.

\bibitem[{Cohen(1978)}]{Cohen:1978}
Cohen, J.~E.
\newblock 1978.
\newblock Derivatives of the spectral radius as a function of non-negative
  matrix elements.
\newblock Math. Proc. Camb. Phil. Soc. 83:183--190.

\bibitem[{Cohen(1979)}]{Cohen:1979:MPCBS}
Cohen, J.~E.
\newblock 1979.
\newblock Random evolutions and the spectral radius of a non-negative matrix.
\newblock Math. Proc. Camb. Phil. Soc. 86:345--350.

\bibitem[{Donsker and Varadhan(1975)}]{Donsker:and:Varadhan:1975}
Donsker, M.~D. and Varadhan, S. R.~S.
\newblock 1975.
\newblock On a variational formula for the principal eigenvalue for operators
  with maximum principle.
\newblock Proceedings of the National Academy of Sciences U.S.A. 72:780--783.

\bibitem[{Friedland(1981)}]{Friedland:1981}
Friedland, S.
\newblock 1981.
\newblock Convex spectral functions.
\newblock Linear and Multilinear Algebra 9:299--316.

\bibitem[{Friedland and Karlin(1975)}]{Friedland:and:Karlin:1975}
Friedland, S. and Karlin, S.
\newblock 1975.
\newblock Some inequalities for the spectral radius of non-negative matrices
  and applications.
\newblock Duke Mathematical Journal 42:459--490.

\bibitem[{Gantmacher(1959{\natexlab{\emph{a}}})}]{Gantmacher:1959vol1}
Gantmacher, F.~R., 1959{\natexlab{\emph{a}}}.
\newblock The Theory of Matrices, volume~1.
\newblock Chelsea Publishing Company, New York.

\bibitem[{Gantmacher(1959{\natexlab{\emph{b}}})}]{Gantmacher:1959vol2}
Gantmacher, F.~R., 1959{\natexlab{\emph{b}}}.
\newblock The Theory of Matrices, volume~2.
\newblock Chelsea Publishing Company, New York.

\bibitem[{Greenbaum(2007)}]{Greenbaum:2007:HLA}
Greenbaum, A., 2007.
\newblock Iterative solution methods for linear systems.
\newblock Pages 41--1 -- 41--20 \emph{in} L.~Hogben, ed. Handbook of Linear
  Algebra. Chapman and Hall.

\bibitem[{Holt(1996)}]{Holt:1996:Oikos}
Holt, R.~D.
\newblock 1996.
\newblock Adaptive evolution in source-sink environments: Direct and indirect
  effects of density-dependence on niche evolution.
\newblock Oikos 75:182--192.

\bibitem[{Karlin(1982)}]{Karlin:1982}
Karlin, S., 1982.
\newblock Classification of selection-migration structures and conditions for a
  protected polymorphism.
\newblock Pages 61--204 \emph{in} M.~K. Hecht, B.~Wallace, and G.~T. Prance,
  eds. Evolutionary Biology, volume~14. Plenum Publishing Corporation.

\bibitem[{Ng(2007)}]{Ng:2007:HLA}
Ng, E.~G., 2007.
\newblock Sparse matrix methods.
\newblock Pages 40--1 -- 40--20 \emph{in} L.~Hogben, ed. Handbook of Linear
  Algebra. Chapman and Hall.

\bibitem[{Seneta(1981)}]{Seneta:1981}
Seneta, E., 1981.
\newblock Non-negative Matrices and Markov Chains.
\newblock Springer-Verlag, New York.

\bibitem[{Steinmeyer and Wilke(2009)}]{Steinmeyer:and:Wilke:2009}
Steinmeyer, S.~H. and Wilke, C.~O.
\newblock 2009.
\newblock Lethal mutagenesis in a structured environment.
\newblock Journal of Theoretical Biology 261:67--73.

\end{thebibliography}
\end{document}